\definecolor{gray0}{gray}{0.40}
\definecolor{gray1}{gray}{0.60}
\definecolor{gray2}{gray}{0.70}
\setlist[itemize]{leftmargin=6mm}
\renewcommand{\P}{\mathbb P}
\DeclareMathOperator{\brk}{\underline{rank}}
\DeclareMathOperator{\rk}{rank}
\DeclareMathOperator{\srk}{srk}
\DeclareMathOperator{\rank}{rank}
\DeclareMathOperator{\skrk}{skrk}
\renewcommand{\sec}{\mathbb{S}ec}
\DeclareMathOperator{\Sym}{Sym}
\renewcommand{\P}{\mathbb{P}}
\newtheorem{thm}{Theorem}[section]
\newtheorem{Conjecture}{Conjecture}
\newtheorem{Lemma}[thm]{Lemma}
\newtheorem{Proposition}[thm]{Proposition}
\newtheorem{Corollary}[thm]{Corollary}
\theoremstyle{definition}
\newtheorem{Definition}[thm]{Definition}
\newtheorem{Remark}[thm]{Remark}
\newtheorem{Example}[thm]{Example}
\begin{document}

\title{On Comon's and Strassen's conjectures}

\date{\today}
\subjclass[2010]{Primary 15A69, 15A72, 11P05; Secondary 14N05, 15A69}
\keywords{Strassen's conjecture, Comon's conjecture, Tensor decomposition, Waring decomposition}

\begin{abstract}
Comon's conjecture on the equality of the rank and the symmetric rank
of a symmetric tensor, and Strassen's conjecture on the additivity of
the rank of tensors are two of the most challenging and guiding
problems in the area of tensor decomposition. We survey the main known
results on these conjectures, and, under suitable bounds on the rank,
we prove them, building on classical techniques used in the case of
symmetric tensors, for mixed tensors. Finally, we improve the bound
for Comon's conjecture given by flattenings by producing new equations
for secant varieties of Veronese and Segre varieties.  
\end{abstract}

\author{Alex Casarotti}
\address{Dipartimento di Matematica e Informatica, Universit\`a di Ferrara, Via Machiavelli 35, 44121 Ferrara, Italy}
\email{csrlxa@unife.it}

\author[Alex Massarenti]{Alex Massarenti}
\address{Universidade Federal Fluminense, Rua Alexandre Moura 8 - S\~ao Domingos, 24210-200 Niter\'oi, Rio de Janeiro, Brazil}
\email{alexmassarenti@id.uff.br}

\author{Massimiliano Mella}
\address{Dipartimento di Matematica e Informatica, Universit\`a di Ferrara, Via Machiavelli 35, 44121 Ferrara, Italy}
\email{mll@unife.it}
\maketitle
\setcounter{tocdepth}{1}

\tableofcontents

\section*{Introduction}
Let $X\subset\P^N$ be an irreducible and reduced non-degenerate
variety. The \textit{rank} $\rank_X(p)$ with respect to $X$ of a point
$p\in\mathbb{P}^N$ is the minimal integer $h$ such that $p$ lies in
the linear span of $h$ distinct points of $X$. In particular, if
$Y\subseteq X$ we have that $\rank_X(p)\leq \rank_Y(p)$. 

Since the \textit{$h$-secant variety} $\sec_h(X)$ of $X$ is the subvariety of $\P^N$ obtained as the closure of the union of all $(h-1)$-planes spanned by $h$ general points of $X$, for a general point $p\in \sec_h(X) $ we have $\rank_X(p) = h$.

When the ambient projective space is a space parametrizing tensors we enter the area of tensor decomposition.  A tensor rank decomposition expresses a tensor as a linear combination of simpler tensors. More precisely, given a tensor $T$, lying in a given tensor space over a field $k$, a tensor rank-1 decomposition of $T$ is an expression of the form 
\stepcounter{thm}
\begin{equation}\label{eq1gen}
T = \lambda_1 U_1+...+\lambda_h U_{h}
\end{equation}
where the $U_i$'s are linearly independent rank one tensors, and $\lambda_i\in k^*$. The rank of $T$ is the minimal positive integer $h$ such that $T$ admits such a decomposition. 

Tensor decomposition problems come out naturally in many areas of
mathematics and applied sciences. For instance, in signal processing, numerical linear algebra, computer vision, numerical analysis, neuroscience, graph analysis, control theory and electrical networks \cite{BK09}, \cite{CM96}, \cite{CGLM08}, \cite{LO15}, \cite{MR13}, \cite{MR13C}, \cite{BFFX17}. In pure
mathematics tensor decomposition issues arise while studying the additive decompositions of a general tensor \cite{Do04}, \cite{DK93}, \cite{MM13}, \cite{Ma16},
\cite{RS00}, \cite{TZ11}, \cite{MMS18}.  

Comon's conjecture \cite{CGLM08}, which states the equality of the rank and symmetric rank of a symmetric tensor, and Strassen's conjecture on the additivity of the rank of tensors \cite{St73} are two of the most important and guiding problems in the area of tensor decomposition. 

More precisely, Comon's conjecture predicts that the rank of a homogeneous polynomial $F\in k[x_0,\dots,x_n]_d$ with respect to the Veronese variety $\mathcal{V}_d^n$ is equal to its rank with respect to the Segre variety $\mathcal{S}^{\underline{n}} \cong (\P^n)^d$ into which $\mathcal{V}_d^n$ is diagonally embedded, that is $\rank_{\mathcal{V}_d^n}(F) = \rank_{\mathcal{S}^{\underline{n}}}(F)$.  

Strassen's conjecture was originally stated for triple tensors and then generalized to a number of different contexts. For instance, for homogeneous polynomials it says that if $F\in k[x_0,\dots,x_n]_{d}$ and $G\in k[y_0,\dots,y_m]_{d}$ are homogeneous polynomials in distinct sets of variables then $\rank_{\mathcal{V}_d^{n+m+1}}(F+G) = \rank_{\mathcal{V}_d^n}(F)+\rank_{\mathcal{V}_d^m}(G)$. 

In Sections \ref{SComon} and \ref{SStrassen}, while surveying the state of the art on Comon's and Strassen's conjectures, we push a bit forward some standard techniques, based on catalecticant matrices and more generally on flattenings, to extend some results on these conjectures, known in the setting of Veronese and Segre varieties, for Segre-Veronese and Segre-Grassmann varieties that is to the context of mixed tensors.   

In Section \ref{sec:RNC} we introduce a method to improve a classical result on Comon's conjecture. By standard arguments involving catalecticant matrices it is not hard to prove that Comon's conjecture holds for the general polynomial in $k[x_0,\dots,x_n]_d$ of symmetric rank $h$ as soon as $h < \binom{n+\lfloor\frac{d}{2}\rfloor}{n}$, see Proposition \ref{p3}. We manage to improve this bound looking for equations for the $(h-1)$-secant variety $\sec_{h-1}(\mathcal{V}_d^n)$, not coming from catalecticant matrices, that are restrictions to the space of symmetric tensors of equations of the $(h-1)$-secant variety $\sec_{h-1}(\mathcal{S}^{\underline{n}})$. We will do so by embedding the space of degree $d$ polynomials into the space of degree $d+1$ polynomials by mapping $F$ to $x_0F$ and then considering suitable catalecticant matrices of $x_0F$ rather than those of $F$ itself. 

Implementing this method in Macaulay2 we are able to prove for instance that Comon's conjecture holds for the general cubic polynomial in $n+1$ variables of rank $h = n+1$ as long as $n\leq 30$. Note that for cubics the usual flattenings work for $h\leq n$.
  
\subsection*{Acknowledgments}
The second and third named authors are members of the Gruppo Nazionale per le Strutture Algebriche, Geometriche e le loro Applicazioni of the Istituto Nazionale di Alta Matematica "F. Severi" (GNSAGA-INDAM). We thank the referees for helping us to improve the exposition.

\section{Notation}
Let $\underline{n}=(n_1,\dots,n_p)$ and $\underline{d} = (d_1,\dots,d_p)$ be two $p$-uples of positive integers.
Set 
$$d=d_1+\dots+d_p,\ n=n_1+\dots+n_p,\ {\rm and}\
N(\underline{n},\underline{d})=\prod_{i=1}^p\binom{n_i+d_i}{n_i}$$ 

Let $V_1,\dots, V_p$ be vector spaces of dimensions $n_1+1\leq n_2+1\leq \dots \leq n_p+1$, and consider the product
$$
\mathbb{P}^{\underline{n}} = \mathbb{P}(V_1^{*})\times \dots \times \mathbb{P}(V_p^{*}).
$$
The line bundle 
$$
\mathcal{O}_{\mathbb{P}^{\underline{n}} }(d_1,\dots, d_p)=\mathcal{O}_{\mathbb{P}(V_1^{*})}(d_1)\boxtimes\dots\boxtimes \mathcal{O}_{\mathbb{P}(V_1^{*})}(d_p)
$$
induces an embedding
$$
\begin{array}{cccc}

\sigma\nu_{\underline{d}}^{\underline{n}}:
&\mathbb{P}(V_1^{*})\times \dots \times \mathbb{P}(V_p^{*})& \longrightarrow &
\mathbb{P}(\Sym^{d_1}V_1^{*}\otimes\dots\otimes \Sym^{d_p}V_p^{*})
=\mathbb{P}^{N(\underline{n},\underline{d})-1},\\
      & ([v_1],\dots,[v_p]) & \longmapsto & [v_1^{d_1}\otimes\dots\otimes v_p^{d_p}]
\end{array} 
$$
where $v_i\in V_i$.
We call the image 
$$
\mathcal{SV}_{\underline{d}}^{\underline{n}}= \sigma\nu_{\underline{d}}^{\underline{n}}(\mathbb{P}^{\underline{n}} ) \subset \mathbb{P}^{N(\underline{n},\underline{d})-1}
$$ 
a \textit{Segre-Veronese variety}. It is a smooth  variety of dimension $n$ and degree 
$\frac{(n_1+\dots +n_p)!}{n_1!\dots n_p!}d_1^{n_1}\dots d_p^{n_p}$ in
$\mathbb{P}^{N(\underline{n},\underline{d})-1}$. 

When $p = 1$, $\mathcal{SV}_{d}^{n}$ is a Veronese variety. In this case we write $\mathcal{V}_d^n$ for $\mathcal{SV}_{d}^{n}$, and $\nu_{d}^{n}$ for the Veronese embedding.
When $d_1 = \dots = d_p = 1$, $\mathcal{SV}_{1,\dots,1}^{\underline{n}}$ is a Segre variety. 
In this case we write $\mathcal{S}^{\underline{n}}$ for $\mathcal{SV}_{1,\dots,1}^{\underline{n}}$, and  $\sigma^{\underline{n}}$ for the Segre embedding.
Note that 
$$
\sigma\nu_{\underline{d}}^{\underline{n}}=\sigma^{\underline{n}'}\circ \left(\nu_{d_1}^{n_1}\times \dots \times \nu_{d_p}^{n_p}\right),
$$
where $\underline{n}'=(N(n_1,d_1)-1,\dots,N(n_p,d_p)-1)$.

Similarly, given a $p$-uple of $k$-vector spaces $(V_1^{n_1},...,V_{p}^{n_p})$ and  $p$-uple of positive integers $\underline{d} = (d_1,...,d_p)$ we may consider the Segre-Pl\"ucker embedding 
$$
\begin{array}{cccc}
\sigma\pi_{\underline{d}}^{\underline{n}}:
&Gr(d_1,n_1)\times \dots \times Gr(d_p,n_p)& \longrightarrow &
\mathbb{P}(\bigwedge^{d_1}V_1^{n_1}\otimes\dots\otimes \bigwedge^{d_p}V_p^{n_p})
=\mathbb{P}^{N(\underline{n},\underline{d})-1},\\
      & ([H_1],\dots,[H_p]) & \longmapsto & [H_1\otimes\dots\otimes H_p]
\end{array}
$$ 
where $N(\underline{n},\underline{d}) = \prod_{i=1}^p\binom{n_i}{d_i}$. We call the image 
$$
\mathcal{SG}_{\underline{d}}^{\underline{n}}= \sigma\pi_{\underline{d}}^{\underline{n}}(Gr(d_1,n_1)\times \dots \times Gr(d_p,n_p)) \subset \mathbb{P}^{N(\underline{n},\underline{d})}
$$ 
a \textit{Segre-Grassmann variety}.

\stepcounter{thm}
\subsection{Flattenings}\label{flat}
Let $V_1,...,V_{p}$ be $k$-vector spaces of finite dimension, and consider the tensor product $V_1\otimes ...\otimes V_{p} = (V_{a_1}\otimes ...\otimes V_{a_s})\otimes (V_{b_1}\otimes ...\otimes V_{b_{p-s}})= V_{A}\otimes V_{B}$ with $A\cup B = \{1,...,p\}$, $B = A^c$. Then we may interpret a tensor 
$$T \in V_1\otimes ...\otimes V_p = V_{A}\otimes V_{B}$$
as a linear map $\widetilde{T}:V_{A}^{*}\rightarrow V_{A^c}$. Clearly, if the rank of $T$ is at most $r$ then the rank of $\widetilde{T}$ is at most $r$ as well. Indeed, a decomposition of $T$ as a linear combination of $r$ rank one tensors yields a linear subspace of $V_{A^c}$, generated by the corresponding rank one tensors, containing $\widetilde{T}(V_A^{*})\subseteq V_{A^c}$. The matrix associated to the linear map $\widetilde{T}$ is called an \textit{$(A,B)$-flattening} of $T$.    

In the case of mixed tensors we can consider the embedding
$$\Sym^{d_1}V_1\otimes ...\otimes \Sym^{d_p}V_p\hookrightarrow V_A\otimes V_B$$
where $V_A = \Sym^{a_1}V_1\otimes ...\otimes \Sym^{a_p}V_p$,
$V_B=\Sym^{b_1}V_1\otimes ...\otimes\Sym^{b_p}V_p$, with $d_i =
a_i+b_i$ for any $i = 1,...,p$. In particular, if $n = 1$ we may
interpret a tensor $F\in \Sym^{d_1}V_1$ as a degree $d_1$ homogeneous
polynomial on $\mathbb{P}(V_1^*)$. In this case the matrix associated
to the linear map $\widetilde{F}:V_A^*\rightarrow V_B$ is nothing but
the $a_1$-th \textit{catalecticant matrix} of $F$, that is the matrix
whose rows are the coefficient of the partial derivatives of order
$a_1$ of $F$.

Similarly, by considering the inclusion 
$$\bigwedge^{d_1}V_1\otimes ...\otimes \bigwedge^{d_p}V_p\hookrightarrow V_A\otimes V_B$$
where $V_A = \bigwedge^{a_1}V_1\otimes ...\otimes \bigwedge^{a_p}V_p$, $V_B=\bigwedge^{b_1}V_1\otimes ...\otimes\bigwedge^{b_p}V_p$, with $d_i = a_i+b_i$ for any $i = 1,...,p$, we get the so called \textit{skew-flattenings}. We refer to \cite{La12} for details on the subject.

\begin{Remark}\label{pd} 
The partial derivatives of an homogeneous polynomials are particular flattenings. The partial derivatives of a polynomial $F\in k[x_0,...,x_n]_d$ are $\binom{n+s}{n}$ homogeneous polynomials of degree $d-s$ spanning a linear space $H_{\partial^s F}\subseteq \mathbb{P}(k[x_0,...,x_n]_{d-s})$. 

If $F\in k[x_0,...,x_n]_d$ admits a decomposition as in (\ref{eq1gen}) then $F\in
  \sec_h(\mathcal{V}_{d}^{n})$, and conversely a general $F\in
  \sec_h(\mathcal{V}_{d}^{n})$ can be written as in (\ref{eq1gen}). If $F = \lambda_1L^{d}_{1}+...+\lambda_hL^{d}_{h}$ is a decomposition then the partial derivatives of order $s$ of $F$ can be decomposed as linear combinations of $L^{d-s}_{1},...,L^{d-s}_{h}$ as well. Therefore, the linear space $\left\langle L_1^{d-s},\dots,L_h^{d-s}\right\rangle$ contains $H_{\partial^sF}$.
\end{Remark}

\stepcounter{thm}
\subsection{Rank and border rank}\label{RbR}
Let $X\subset\P^N$ be an irreducible and reduced non-degenerate variety. We define the rank $\rank_X(p)$ with respect to $X$ of a point $p\in\mathbb{P}^N$ as the minimal integer $h$ such that there exist $h$ points in linear general position $x_1,\dots,x_h\in X$ with $p\in\left\langle x_1,\dots,x_h\right\rangle$. Clearly, if $Y\subseteq X$ we have that
\stepcounter{thm}
\begin{equation}\label{ineq}
\rank_X(p)\leq \rank_Y(p)
\end{equation}
The border rank $\brk_X(p)$ of $p\in\mathbb{P}^N$ with respect to $X$ is the smallest integer
$r>0$ such that $p$ is in the Zariski closure of the set of points $q\in\mathbb{P}^N$ such that $\rank_X(q)=r$. In particular $\brk_X(p)\leq \rank_X(p)$. 

Recall that given an irreducible and reduced non-degenerate variety
$X\subset\P^N$, and a positive integer $h\leq N$ the
\textit{$h$-secant variety} $\sec_h(X)$ of $X$ is the subvariety of
$\P^N$ obtained as the Zariski closure of the union of all $(h-1)$-planes
spanned by $h$ general points of $X$. 

In other words $\brk_X(p)$ is computed by the smallest secant variety $\sec_h(X)$ containing $p\in\mathbb{P}^N$.

Now, let $Y,Z$ be subvarieties of an irreducible projective variety $X\subset\mathbb{P}^N$, spanning two linear subspaces $\mathbb{P}^{N_Y} :=\left\langle Y\right\rangle, \mathbb{P}^{N_Z} :=\left\langle Z\right\rangle\subseteq\mathbb{P}^N$. Fix two points $p_Y\in\mathbb{P}^{N_Y}, p_Z\in\mathbb{P}^{N_Z}$, and consider a point $p \in \left\langle p_Y,p_Z\right\rangle$. Clearly
\stepcounter{thm}
\begin{equation}\label{ineqstrass}
\rank_X(p)\leq \rank_Y(p_Y)+\rank_Z(p_Z)
\end{equation}

\section{Comon's conjecture}\label{SComon}
It is natural to ask under which assumptions (\ref{ineq}) is indeed an equality. Consider the Segre-Veronese embedding $\sigma\nu_{\underline{d}}^{\underline{n}}:
\mathbb{P}(V_1^{*})\times \dots \times \mathbb{P}(V_p^{*})\rightarrow
\mathbb{P}(\Sym^{d_1}V_1^{*}\otimes\dots\otimes \Sym^{d_p}V_p^{*})
=\mathbb{P}^{N(\underline{n},\underline{d})-1}$ with $V_1 \cong \dots \cong V_p\cong V$ $k$-vector spaces of dimension $n+1$. Its composition with the diagonal embedding $i:\mathbb{P}(V^{*})\rightarrow\mathbb{P}(V_1^{*})\times \dots \times \mathbb{P}(V_p^{*})$ is the Veronese embedding of $\nu_{d}^{n}$ of degree $d = d_1+\dots +d_p$. Let $\mathcal{V}_{d}^n\subseteq\mathcal{SV}_{\underline{d}}^{\underline{n}}$ be the corresponding Veronese variety. We will denote by $\Pi_{n,d}$ the linear span of $\mathcal{V}_{d}^n$ in $\mathbb{P}^{N(\underline{n},\underline{d})-1}$.

In the notations of Section \ref{RbR} set $X = \mathcal{SV}_{\underline{d}}^{\underline{n}}$ and $Y = \mathcal{V}_{d}^n$.  For any symmetric tensor $T\in \Pi_{n,d}$ we may consider its symmetric rank $\srk(T) := \rank_{\mathcal{V}_{d}^n}(T)$ and its rank $\rk(T):= \rank_{\mathcal{SV}_{\underline{d}}^{\underline{n}}}(T)$ as a mixed tensor. Comon's conjecture predicts that in this particular setting the inequality (\ref{ineq}) is indeed an equality \cite{CGLM08}.

\begin{Conjecture}[Comon's]\label{cconj}
Let $T$ be a symmetric tensor. Then $\rk(T) = \srk(T)$.
\end{Conjecture}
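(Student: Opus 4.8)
The plan is to attack the two inequalities separately. One direction is automatic: since $V_1\cong\dots\cong V_p\cong V$ forces the Veronese $\mathcal{V}_d^n$ to sit diagonally inside the Segre--Veronese variety $\mathcal{SV}_{\underline{d}}^{\underline{n}}$, inequality (\ref{ineq}) applied with $Y=\mathcal{V}_d^n\subseteq X=\mathcal{SV}_{\underline{d}}^{\underline{n}}$ yields $\rk(T)\leq\srk(T)$ for every symmetric tensor $T$. Thus the entire content of the conjecture is the reverse bound $\srk(T)\leq\rk(T)$: every decomposition of $T$ as a sum of (possibly non-symmetric) rank one tensors must be matched, in length, by a symmetric one.

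To produce a lower bound on $\rk(T)$ that is accessible to symmetric data, I would use flattenings. As recalled in Section \ref{flat}, if $T$ has rank at most $r$ then every flattening $\widetilde{T}$ has matrix rank at most $r$; in particular the middle catalecticant of the symmetric tensor $T$ satisfies $\rank(\widetilde{T})\leq\rk(T)$, and by Remark \ref{pd} the derivative space $H_{\partial^s F}$ sits inside the span of the decomposing powers, so the same catalecticant bounds $\srk(T)$ from below as well. The point is that this flattening is intrinsic to $T$ and does not record whether we decompose symmetrically or not. Hence if one can arrange that the symmetric rank is \emph{computed} by the catalecticant, i.e.\ $\srk(T)=\rank(\widetilde{T})$, then chaining the estimates gives
\[
\srk(T)=\rank(\widetilde{T})\leq\rk(T)\leq\srk(T),
\]
and Comon's equality follows for such $T$.

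Concretely, I would carry this out for tensors of low rank. For a general $F\in k[x_0,\dots,x_n]_d$ of symmetric rank $h$, the span of the order $s$ partial derivatives has the expected dimension, and as long as $h<\binom{n+\lfloor d/2\rfloor}{n}$ the middle catalecticant has rank exactly $h$; this is the content behind Proposition \ref{p3}. For such $F$ the displayed chain closes. The remaining steps are then (i) to verify that the relevant secant variety is not defective in the range considered, so that ``general of symmetric rank $h$'' is a meaningful notion, and (ii) to transport the statement from the Veronese to the genuinely mixed Segre--Veronese and Segre--Grassmann settings by replacing catalecticants with the appropriate (skew-)flattenings.

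The main obstacle is that this strategy is confined below the flattening bound. Once $h\geq\binom{n+\lfloor d/2\rfloor}{n}$ the catalecticant saturates at the dimension of the derivative space and can no longer detect the true rank, so the inequality $\rank(\widetilde{T})\leq\rk(T)$ becomes strictly weaker than $\srk(T)$ and the chain breaks. Closing this gap would require equations for $\sec_{h-1}(\mathcal{V}_d^n)$ that are restrictions of equations for $\sec_{h-1}(\mathcal{S}^{\underline{n}})$ but do \emph{not} arise from catalecticants of $F$; the device of passing to $x_0F$ and using its catalecticants, pursued in Section \ref{sec:RNC}, is precisely an attempt to manufacture such equations and push the bound further. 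Since no method is presently known that controls all mixed-tensor decompositions at arbitrarily high rank, the conjecture remains open in full generality, and my proposal should be read as a proof of the bounded-rank instances rather than of the whole statement.
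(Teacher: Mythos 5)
The statement you were asked to prove is labelled as a conjecture, and the paper contains no proof of it; indeed it cannot be proved, because the paper itself records Y.~Shitov's counterexample: a symmetric tensor $T$ in $\mathbb{C}^{800}\otimes\mathbb{C}^{800}\otimes\mathbb{C}^{800}$ with $\rk(T)=903<\srk(T)$. You are therefore right to present your argument as establishing only bounded-rank instances, but your closing claim that ``the conjecture remains open in full generality'' is inaccurate: it is false in full generality, and only its border-rank analogue (and the question for \emph{general} symmetric tensors of given rank) remains open. This matters for your strategy, since it shows the gap you identify above the flattening bound is not merely a technical obstruction that better equations might remove --- no family of equations can close the chain for every symmetric tensor.

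As for the part you do prove, it is correct and is essentially the paper's own route. Your chain $\srk(T)=\rank(\widetilde{T})\leq\rk(T)\leq\srk(T)$, valid for a general $T$ of symmetric rank $h<\binom{n+\lfloor d/2\rfloor}{n}$, is exactly Proposition~\ref{p3}: the easy inequality $\rk(T)\leq\srk(T)$ from (\ref{ineq}), the lower bound on $\rk(T)$ by the middle catalecticant viewed as a flattening of the mixed tensor, and the nondegeneracy of that catalecticant on a dense open subset of $\sec_h(\mathcal{V}_d^n)$, which the paper extracts from Lemma~\ref{l1} (via \cite[Proposition 4.1]{SU00}) rather than from a separate non-defectivity check as you suggest in step (i). Your step (ii), replacing catalecticants by (skew-)flattenings to cover the Segre--Grassmann case, is the paper's Proposition~\ref{p4}. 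The only genuinely new ground the paper gains beyond this bound is the $x_0F$ device of Remark~\ref{embdeg} and Section~\ref{sec:RNC}, which you correctly identify but do not carry out.
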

Conjecture \ref{cconj} has been generalized in a number of directions for complex border rank, real rank and real border rank, see \cite[Section 5.7.2]{La12} for a full overview.

Note that when $d = 2$ Comon's conjecture is true. Indeed, $\sec_h(\mathcal{S}^{\underline{n}})$ is cut out by the size $(h+1)\times (h+1)$ minors of a general square matrix and $\sec_h(\mathcal{V}_{2}^n)$ is cut out by the size $(h+1)\times (h+1)$ minors of a general symmetric matrix, that is $\sec_h(\mathcal{V}_{2}^n) = \sec_h(\mathcal{S}^{\underline{n}})\cap\Pi_{n,2}$.

Conjecture \ref{cconj} has been proved in several special cases. For instance, when the symmetric rank is at most two \cite{CGLM08}, when the rank is less than or equal to the order \cite{ZHQ16}, for tensors belonging to tangential varieties to Veronese varieties \cite{BB13}, for tensors in $\mathbb{C}^{2}\otimes\mathbb{C}^{n}\otimes\mathbb{C}^{n}$ \cite{BL13}, when the rank is at most the flattening rank plus one \cite{Fr16}, for the so called Coppersmith–Winograd tensors \cite{LM17}, for symmetric tensors in $\mathbb{C}^4\otimes \mathbb{C}^4\otimes\mathbb{C}^4$ and also for symmetric tensors of symmetric rank at most seven in $\mathbb{C}^n\otimes \mathbb{C}^n\otimes\mathbb{C}^n$ \cite{Se18}.

On the other hand, a counter-example to Comon's conjecture has
recently been found by Y. Shitov \cite{Sh17}. The counter-example
consists of a symmetric tensor $T$ in
$\mathbb{C}^{800}\times\mathbb{C}^{800}\times\mathbb{C}^{800}$ which
can be written as a sum of $903$ rank one tensors but not as a sum of 903
symmetric rank one tensors. It is important to stress that for this
tensor $T$ rank and border rank are quite different. Comon's conjecture for border ranks is still completely open \cite[Problem 25]{Sh17}.

Even though it has been recently proven false in full generality, we believe that Comon's conjecture is true for a general symmetric tensor, perhaps it is even true for those tensor for which $\rank T=\brk T$.

In what follows we use simple arguments based on flattenings to give sufficient conditions for Comon's conjecture, recovering a known result, and its skew-symmetric analogue.

\begin{Lemma}\label{l1}
The tensors $T\in \sec_h(\mathcal{SV}_{\underline{d}}^{\underline{n}})$ such that $\dim (\widetilde{T}(V_A^*))\leq h-1$ for a given flattening $\widetilde{T}$ form a proper closed subset of $\sec_h(\mathcal{SV}_{\underline{d}}^{\underline{n}})$. Furthermore, the same result holds if we replace the Segre-Veronese variety $\mathcal{SV}_{\underline{d}}^{\underline{n}}$ with the Segre-Grassmann variety $\mathcal{SG}_{\underline{d}}^{\underline{n}}$.
\end{Lemma}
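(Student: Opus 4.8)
The plan is to separate the two assertions: closedness is formal, and properness carries all the content. First I would note that $T\mapsto\widetilde T$ is linear, so each entry of the flattening matrix is a linear form in the coordinates of $T$, and the condition $\dim(\widetilde T(V_A^*))\le h-1$ is the vanishing of all $h\times h$ minors of this matrix. Thus the locus is the intersection of $\sec_h(\mathcal{SV}_{\underline d}^{\underline n})$ with a determinantal (hence closed) subset, and is closed. Everything then reduces to exhibiting a single $T\in\sec_h(\mathcal{SV}_{\underline d}^{\underline n})$ with $\dim(\widetilde T(V_A^*))=h$, since such a $T$ keeps the locus away from a general point of $\sec_h$. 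Concretely I would prove that the generic flattening rank on $\sec_h$ equals $\min\{h,\dim V_A,\dim V_B\}$, so that it equals $h$ exactly in the relevant range $h\le\dim V_A,\dim V_B$.

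For the Segre--Veronese variety I would take $h$ general points $[U_1],\dots,[U_h]$ and set $T=\lambda_1U_1+\dots+\lambda_hU_h$ with $\lambda_i\in k^*$. Under the flattening into $V_A\otimes V_B$, a rank one tensor $U_i=(v_1^{(i)})^{d_1}\otimes\dots\otimes(v_p^{(i)})^{d_p}$ maps to the rank one matrix $A_i\otimes B_i$, where $A_i=(v_1^{(i)})^{a_1}\otimes\dots\otimes(v_p^{(i)})^{a_p}$ and $B_i=(v_1^{(i)})^{b_1}\otimes\dots\otimes(v_p^{(i)})^{b_p}$ are points of the smaller Segre--Veronese varieties $\mathcal{SV}_{\underline a}^{\underline n}\subset\P(V_A)$ and $\mathcal{SV}_{\underline b}^{\underline n}\subset\P(V_B)$. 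These are non-degenerate, so $h$ general points on each are in linear general position, giving $A_1,\dots,A_h$ linearly independent in $V_A$ and $B_1,\dots,B_h$ linearly independent in $V_B$ whenever $h\le\dim V_A,\dim V_B$. Since $\widetilde T(\phi)=\sum_i\lambda_i\,\phi(A_i)\,B_i$ and the $A_i$ are independent, the scalars $\phi(A_i)$ can be prescribed freely, so $\widetilde T(V_A^*)=\langle B_1,\dots,B_h\rangle$ has dimension $h$, as wanted.

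For the Segre--Grassmann variety the same skeleton applies, but the dictionary changes: the skew-flattening comes from the comultiplication $\bigwedge^{d_i}V_i\to\bigwedge^{a_i}V_i\otimes\bigwedge^{b_i}V_i$, so a decomposable tensor maps not to a rank one matrix but to one of rank $\prod_i\binom{d_i}{b_i}$, with column and row spaces $\bigotimes_i\bigwedge^{a_i}W_i$ and $\bigotimes_i\bigwedge^{b_i}W_i$ determined by the supporting subspaces $W_i$. Rather than argue by general position I would exhibit an explicit configuration of coordinate decomposables $U_1,\dots,U_h$ with pairwise distinct index tuples. A nonzero entry of $\widetilde{U_i}$ sits at a pair of multi-indices $(S,S')$ with $S_j\sqcup S_j'$ equal to the $j$-th index set of $U_i$; since this pair recovers the whole index tuple of $U_i$, the supports of $\widetilde{U_1},\dots,\widetilde{U_h}$ are pairwise disjoint. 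Choosing one entry from each support and arranging, by a system-of-distinct-representatives selection of the $U_i$, that the chosen rows and columns are pairwise distinct, I extract an $h\times h$ submatrix of $\widetilde T$ that is diagonal with nonzero diagonal, whence $\rank(\widetilde T)\ge h$.

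The hard part will be the Segre--Grassmann step: the clean correspondence \emph{rank one tensor $\mapsto$ rank one matrix} that makes the Veronese case immediate is lost, and one must control how several overlapping rank-$\prod_i\binom{d_i}{b_i}$ flattenings combine. The disjoint-support observation for coordinate decomposables is exactly what circumvents this, reducing the bound $\rank(\widetilde T)\ge h$ to a finite combinatorial selection that is feasible precisely in the range $h\le\dim V_A,\dim V_B$.
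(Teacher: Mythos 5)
Your argument is a genuinely different route from the paper's: the paper disposes of both cases in two lines by citing \cite[Proposition 4.1]{SU00} for the non-vanishing of the $h\times h$ minors of the (skew-)flattening on $\sec_h$, whereas you prove that non-vanishing directly by exhibiting a point of flattening rank $h$. Your Segre--Veronese half is correct and complete: closedness via minors, the observation that a rank one tensor flattens to $A_i\otimes B_i$ with $A_i\in\mathcal{SV}_{\underline a}^{\underline n}$, $B_i\in\mathcal{SV}_{\underline b}^{\underline n}$, and linear independence of $h$ general points on a non-degenerate irreducible variety. You also correctly flag the implicit hypothesis $h\leq\min\{\dim V_A,\dim V_B\}$, without which the statement is false as written; this is a point the paper glosses over.

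The Segre--Grassmann half, however, has a gap, and it sits exactly where you placed the difficulty. Granting your (correct) observation that the supports of $\widetilde{U_1},\dots,\widetilde{U_h}$ are pairwise disjoint, two things remain unjustified. First, the existence of the system of distinct representatives --- one entry per $\widetilde{U_i}$ with pairwise distinct rows and pairwise distinct columns --- is asserted, not proved; already for $h=3$ coordinate points of $Gr(2,4)$ such as $e_1\wedge e_2$, $e_3\wedge e_4$, $e_1\wedge e_3$ the naive greedy choice fails and one must backtrack, so some Hall-type argument is needed. Second, and more seriously, even with such a selection the resulting $h\times h$ submatrix is \emph{not} diagonal: the $(i,j)$ entry, namely the $(S^{(i)},S'^{(j)})$ entry of $\widetilde T$, is nonzero whenever $S^{(i)}_l\sqcup S'^{(j)}_l$ happens to equal the index tuple of some $U_k$, and disjointness of the supports does not preclude this (in the example above one gets a triangular, not diagonal, matrix). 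So the determinant is a genuine polynomial in $\lambda_1,\dots,\lambda_h$ whose non-vanishing still has to be argued --- e.g.\ by showing the identity permutation's contribution $\pm\lambda_1\cdots\lambda_h$ cannot be cancelled, or by refining the selection so that $S^{(i)}_l\cap S'^{(j)}_l\neq\emptyset$ for all $i\neq j$. As written, the final sentence ``diagonal with nonzero diagonal, whence $\rank(\widetilde T)\geq h$'' does not follow. The idea is likely repairable, but this step is the content of the skew case and cannot be waved through.
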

\begin{proof}
Let $T\in \sec_h(\mathcal{SV}_{\underline{d}}^{\underline{n}})$ be a general
point. Assume that $\dim (\widetilde{T}(V_A^*))\leq h-1$. This condition forces the $(A,B)$-flattening matrix to have rank
at most $h-1$. On the other hand, by \cite[Proposition 4.1]{SU00} these minors do not vanish on $\sec_h(\mathcal{SV}_{\underline{d}}^{\underline{n}})$, and therefore define a proper closed subset of $\sec_h(\mathcal{SV}_{\underline{d}}^{\underline{n}})$. In the Segre-Grassmann setting we argue in the same way by using skew-flattenings.
\end{proof}

\begin{Proposition}\cite{IK99}\label{p3}
For any integer $h <\binom{n+\lfloor\frac{d}{2}\rfloor}{n}$ there exists an open subset $\mathcal{U}_h\subseteq\sec(\mathcal{V}_n^d)$ such that for any $T\in \mathcal{U}_h$ the rank and the symmetric rank of $T$ coincide, that is
$$\rk(T) = \srk(T)$$
\end{Proposition}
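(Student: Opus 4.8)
The plan is to exploit the fact that for a general tensor of low symmetric rank, a suitable catalecticant (flattening) matrix already detects the rank exactly, so that the two a priori different rank functions are squeezed together. Write $s=\lfloor d/2\rfloor$, and fix the $(A,B)$-flattening corresponding to the $s$-th catalecticant matrix of a polynomial $F\in k[x_0,\dots,x_n]_d$, as described in Section \ref{flat}. This flattening $\widetilde{F}\colon V_A^*\to V_B$ is a linear map between spaces of dimensions $\binom{n+s}{n}$ and $\binom{n+d-s}{n}$, and its matrix is exactly the matrix of partial derivatives of order $s$ of $F$.

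First I would recall the universal inequalities. For any symmetric tensor $T\in\Pi_{n,d}$ we have $\rk(T)\le\srk(T)$ by (\ref{ineq}), since $\mathcal{V}_d^n\subseteq\mathcal{SV}_{\underline{d}}^{\underline{n}}$; so the content is the reverse inequality $\srk(T)\le\rk(T)$. On the other hand, the flattening bound gives a lower bound on \emph{both} ranks: as explained in Section \ref{flat}, a rank-$r$ decomposition of $T$ (as a mixed tensor) forces the flattening $\widetilde{T}$ to have rank at most $r$, so $\rk(T)\ge\rank\widetilde{T}$, and likewise $\srk(T)\ge\rank\widetilde{T}$.

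Next I would use Lemma \ref{l1} to produce the open set $\mathcal{U}_h$. By that lemma, the locus of $T\in\sec_h(\mathcal{V}_d^n)$ for which the chosen flattening has rank at most $h-1$ is a proper closed subset; let $\mathcal{U}_h\subseteq\sec_h(\mathcal{V}_d^n)$ be its complement, which is open and nonempty precisely because the relevant $h\times h$ minors do not vanish identically on the secant variety. The hypothesis $h<\binom{n+s}{n}$ guarantees that $h$ does not exceed the number of rows of the catalecticant, so an $h\times h$ minor of $\widetilde{T}$ genuinely exists; this is the point where the bound on $h$ enters, and it is the step I expect to require the most care, since one must check that the generic catalecticant rank on $\sec_h(\mathcal{V}_d^n)$ is exactly $h$ and not smaller. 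For $T\in\mathcal{U}_h$ we then have $\rank\widetilde{T}\ge h$.

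Finally I would assemble the chain of inequalities. For a general $T\in\sec_h(\mathcal{V}_d^n)$ we have $\srk(T)=h$ (a general point of the $h$-secant variety has symmetric rank $h$). Combining the lower bound from the flattening with the universal inequality (\ref{ineq}) yields, for $T\in\mathcal{U}_h$,
\[
h=\srk(T)\ge\rk(T)\ge\rank\widetilde{T}\ge h,
\]
forcing equality throughout. Hence $\rk(T)=\srk(T)$ on $\mathcal{U}_h$, which is the assertion. The only genuine obstacle is the nonvanishing of the size-$h$ catalecticant minors on $\sec_h(\mathcal{V}_d^n)$, i.e. that a general tensor of symmetric rank $h$ has full flattening rank $h$; this is exactly what the classical results on catalecticants (and the dimension bound $h<\binom{n+s}{n}$) provide, so I would cite that input rather than reprove it.
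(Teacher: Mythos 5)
Your proposal is correct and follows essentially the same route as the paper: the $\lfloor d/2\rfloor$-th catalecticant flattening, Lemma \ref{l1} to define $\mathcal{U}_h$ as the locus where that flattening has full rank $h$, and the squeeze $h=\srk(T)\geq\rk(T)\geq\rank\widetilde{T}\geq h$. The paper's proof is the same chain of inequalities written in the opposite order, with the nonvanishing of the size-$h$ minors likewise delegated to Lemma \ref{l1} (via \cite{SU00}).
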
 
\begin{proof}
First of all, note that we always have $\rk(T)\leq\srk(T)$. Furthermore, Section \ref{flat} yields that for any $(A,B)$-flattening $\widetilde{T}:V_A^{*}\rightarrow V_B$ the inequality $\rk(T)\geq \dim(\widetilde{T}(V_A^{*}))$ holds. Since $T$ is symmetric and its catalecticant matrices are particular flattenings we get that $\rk(T)\geq \dim(H_{\partial^sT})$ for any $s\geq 0$.

Now, for a general $T\in \sec_h(\mathcal{V}_{d}^n)$ we have $\srk(T) = h$, and if $h <\binom{n+\overline{s}}{n}$, where $\overline{s} = \lfloor\frac{d}{2}\rfloor$, then Lemma \ref{l1} yields $\dim(H_{\partial^{\overline{s}}T}) = h$. Therefore, under these conditions we have the following chain of inequalities
$$\dim(H_{\partial^{\overline{s}}T})\leq \rk(T)\leq \srk(T) = \dim(H_{\partial^{\overline{s}}T})$$
and hence $\rk(T) = \srk(T)$. 
\end{proof}

Now, consider the Segre-Pl\"ucker embedding $\mathbb{P}(V_{1})\times\ldots\times\mathbb{P}(V_{p})\rightarrow\mathbb{P}(\bigwedge^{d_{1}}V_{1}\otimes\dots\otimes\bigwedge^{d_{p}}V_{p})=\mathbb{P}^{N(\underline{n},\underline{d})-1}$ with $V_{1}\cong\ldots\cong V_{p}\cong V$ $k$-vector spaces of dimension $n+1$. Its composition with the diagonal embedding $i:\mathbb{P}(V)\rightarrow\mathbb{P}(V_{1})\times\dots\times\mathbb{P}(V_{p})$ is the Pl\"ucker embedding of $Gr(d,n)$ with $d=d_{1}+\ldots+d_{p}$. Let $Gr(d,n)\subseteq\mathcal{SG}_{\underline{d}}^{\underline{n}}$ be the corresponding Grassmannian and let us denote by $\Pi_{n,d}$ its linear span in $\mathbb{P}^{N(\underline{n},\underline{d})-1}$. 

For any skew-symmetric tensor $T\in\Pi_{n,d}$ we may consider its skew rank $\skrk(T)$ that is its rank with respect to the Grassmannian $Gr(d,n)\subseteq\Pi_{n,d}$, and its rank $\rk(T)$ as a mixed tensor. Playing the same game as in Proposition \ref{p3} we have the following.

\begin{Proposition}\label{p4}
For any integer $h<\binom{n}{\lfloor\frac{d}{2}\rfloor}$ there exists an open subset $\mathcal{U}_{h}\subseteq\sec_{h}(Gr(d,n))$ such that for any $T\in\mathcal{U}_{h}$ the rank and the skew rank of $T$ coincide, that is 
$$\rk(T)=\skrk(T)$$
\end{Proposition}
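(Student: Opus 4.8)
The plan is to reproduce the argument of Proposition \ref{p3} almost verbatim, trading the Veronese/Segre pair $(\mathcal{V}_d^n,\mathcal{S}^{\underline n})$ for the Grassmann/Segre-Grassmann pair $(Gr(d,n),\mathcal{SG}_{\underline d}^{\underline n})$, the catalecticants for the balanced skew-flattening, and Proposition \ref{p3}'s appeal to Lemma \ref{l1} for its Segre-Grassmann counterpart (the ``Furthermore'' clause of Lemma \ref{l1}). First I would record the two inequalities that bracket $\rk(T)$. Since $Gr(d,n)\subseteq\mathcal{SG}_{\underline d}^{\underline n}$, inequality (\ref{ineq}) gives $\rk(T)\le\skrk(T)$ for every skew-symmetric $T\in\Pi_{n,d}$. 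For the opposite direction I invoke Section \ref{flat}: for a suitable skew-flattening $\widetilde T\colon V_A^*\to V_B$ one controls $\rk(T)$ from below by $\dim\big(\widetilde T(V_A^*)\big)$, a rank-$r$ expression of $T$ as a mixed tensor confining $\widetilde T(V_A^*)$ to a subspace of $V_B$ governed by the images of the $r$ rank-one summands.

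Next I would fix the balanced splitting $\overline s=\lfloor\tfrac d2\rfloor$, $b=d-\overline s$, for which the source $\bigwedge^{\overline s}V$ of the associated skew-flattening has dimension $\binom{n}{\overline s}=\binom{n}{\lfloor d/2\rfloor}$; this is exactly what pins down the threshold in the statement. For a general $T\in\sec_h(Gr(d,n))$ one has $\skrk(T)=h$, and I would apply the Segre-Grassmann form of Lemma \ref{l1}: when $h<\binom{n}{\lfloor d/2\rfloor}$ the locus where $\dim(\widetilde T(V_A^*))\le h-1$ is a proper closed subvariety, so on a dense open $\mathcal{U}_h\subseteq\sec_h(Gr(d,n))$ the balanced skew-flattening already forces $\dim(\widetilde T(V_A^*))\ge h$. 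Since $\dim(\widetilde T(V_A^*))\le\rk(T)\le\skrk(T)=h$ holds on all of $\mathcal{U}_h$, the lower estimate collapses the inequalities into the chain
\[
h\le\dim\big(\widetilde T(V_A^*)\big)\le\rk(T)\le\skrk(T)=h,
\]
which forces $\rk(T)=\skrk(T)$ on $\mathcal{U}_h$.

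The main obstacle is calibrating the skew-flattening bound so that the squeeze actually closes, and this is the one place where the skew setting genuinely departs from the symmetric one. In Proposition \ref{p3} the argument is clean because a pure power $L^d$ catalecticant-flattens to a rank-one matrix, so the flattening rank is automatically at most $\srk$; by contrast a decomposable multivector does \emph{not} skew-flatten to rank one, so the passage between $\dim(\widetilde T(V_A^*))$ and the Segre-Grassmann rank $\rk(T)$ cannot be read off by naive counting and must be extracted from the non-vanishing of the relevant minors on the secant variety via \cite[Proposition 4.1]{SU00}, as packaged in Lemma \ref{l1}. Concretely, the delicate verification is that a general skew tensor of skew-rank $h<\binom{n}{\lfloor d/2\rfloor}$ has balanced skew-flattening rank exactly $h$ as a point of $\sec_h(Gr(d,n))$, and not merely as a point of the larger $\sec_h(\mathcal{SG}_{\underline d}^{\underline n})$; once this is secured the remainder is the formal translation sketched above.
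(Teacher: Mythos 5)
Your argument coincides with the paper's own proof of Proposition \ref{p4}: the same squeeze $\dim(\widetilde{T}_{\overline{s}}(V_{A}^{*}))\leq \rk(T)\leq \skrk(T)=\dim(\widetilde{T}_{\overline{s}}(V_{A}^{*}))$, obtained from the balanced skew-flattening for the partition $(\lfloor\frac{d}{2}\rfloor,d-\lfloor\frac{d}{2}\rfloor)$ together with the Segre--Grassmann clause of Lemma \ref{l1}. The subtlety you flag at the end --- that a decomposable multivector does not skew-flatten to rank one, so the non-vanishing of the relevant minors on $\sec_{h}(Gr(d,n))$ itself must come from \cite[Proposition 4.1]{SU00} rather than from naive counting --- is treated in exactly the same way (i.e., delegated to that citation) in the published proof, so you are not missing any step that the paper supplies.
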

\begin{proof}
As before for any tensor $T$ we have $\rk(T)\leq \skrk(T)$. For any $(A,B)$-skew-flattening $\widetilde{T}:V_A^{*}\rightarrow V_B$ we have $\skrk(T)\geq \dim(\widetilde{T}(V_A^{*}))$. Furthermore, since $\widetilde{T}$ is in particular a flattening also the inequality $\rk(T)\geq \dim(\widetilde{T}(V_A^{*}))$ holds.
	
Now, for a general $T\in\sec_{h}(Gr(d,n))$ we have $\skrk(T)=h$, and if $h<\tbinom{n}{\overline{s}}$, where $\overline{s}=\lfloor\frac{d}{2}\rfloor$, Lemma \ref{l1} yields $\skrk(T)=\dim(\widetilde{T}_{\overline{s}}(V_{A}^{*}))$, where $\widetilde{T}_{\overline{s}}$ is the skew-flattening corresponding to the partition $(\overline{s},d-\overline{s})$ of $d$. Therefore, we deduce that 
$$\dim(\widetilde{T}_{\overline{s}}(V_{A}^{*}))\leq \rk(T)\leq \skrk(T)=\dim(\widetilde{T}_{\overline{s}}(V_{A}^{*}))$$ 
and hence $\rk(T)=\skrk(T)$. 
\end{proof}

\begin{Remark}\label{embdeg}
Propositions \ref{p3}, \ref{p4} suggest that whenever we are able to write determinantal equations for secant varieties we are able to verify Comon's conjecture. We conclude this section suggesting a possible way to improve the range where the general Comon's conjecture holds giving a conjectural way to produce determinantal equations for some secant varieties.

Set $\underline{n}=(n,\ldots,n)$, $(d+1)$-times, $\underline{n}_1=(n,\ldots, n)$, $d$-times, and consider the
corresponding Segre varieties $X:=\mathcal{S}^{\underline{n}}$, $X_1:=\mathcal{S}^{\underline{n}_1}$ and Veronese varieties $Y=\mathcal{V}^n_{d+1}$, $Y_1:=\mathcal{V}^n_d$. Fix the polynomial $x_0^{d+1}\in Y$ and let $\Pi$ be the linear space spanned by the polynomials of the form $x_0 F$, where $F$ is a polynomial of degree $d$. This allow us to see $Y_1\subseteq \Pi$. Note that polynomials of the form $x_0 L_1^d$ lie in the tangent space of $Y$ at $L_1^{d+1}$, and therefore $\brk_{Y}(x_0L^{\otimes d})=2$.

Hence for a polynomial $F$ of degree $d$ we have $\rank_{Y}(x_0F)\leq 2\rank_{Y_1}(F)$. Our aim is to understand when the equality holds.

We may mimic the same construction for the Segre varieties $X$ and
$X_1$, and use determinantal equations for the secant varieties of
$X_1$ to give determinantal equations of the secant varieties of $X$
and henceforth conclude Comon's conjecture. In particular, as soon as
$d$ is odd and $d<n$, this produces new determinantal equations for
$\sec_h(X_1)$ and $\sec_h(Y_1)$ with
$2h<\binom{n+\frac{d+1}2}{n}$. Therefore, this would give new cases in
which the general Comon's conjecture holds. Unfortunately, we are only
able to successfully implement this procedure in very special cases,
see Section~\ref{sec:RNC}.
\end{Remark}

\section{Strassen's conjecture}\label{SStrassen}
Another natural problem consists in giving hypotheses under which in (\ref{ineqstrass}) equality holds. Consider the triple Segre embedding $\sigma^{\underline{n}}:
\mathbb{P}(V_1^{*})\times \mathbb{P}(V_2^{*})\times\mathbb{P}(V_3^{*}) = \mathbb{P}^a\times \mathbb{P}^b\times\mathbb{P}^c\rightarrow
\mathbb{P}(V_1^{*}\otimes V_2^{*}\otimes V_1^{*})
=\mathbb{P}^{N(\underline{n},\underline{d})-1}$, and let $\mathcal{S}^{\underline{n}}$ be the corresponding Segre variety. Now, take complementary subspaces $\mathbb{P}^{a_1},\mathbb{P}^{a_2}\subset \mathbb{P}^{a}$, $\mathbb{P}^{b_1},\mathbb{P}^{b_2}\subset \mathbb{P}^{b}$, $\mathbb{P}^{c_1},\mathbb{P}^{c_2}\subset \mathbb{P}^{c}$, and let $\mathcal{S}^{(a_1,b_1,c_1)}, \mathcal{S}^{(a_2,b_2,c_2)}$ be the Segre varieties associated respectively to $\mathbb{P}^{a_1}\times\mathbb{P}^{b_1}\times\mathbb{P}^{c_1}$ and $\mathbb{P}^{a_2}\times\mathbb{P}^{b_2}\times\mathbb{P}^{c_2}$. 

In the notations of Section \ref{RbR} set $X = \mathcal{S}^{\underline{n}}$, $Y = \mathcal{S}^{(a_1,b_1,c_1)}$ and $Z=\mathcal{S}^{(a_1,b_1,c_1)}$. Strassen's conjecture states that the additivity of the rank holds for triple tensors, or in onther words that in this setting the inequality (\ref{ineqstrass}) is indeed an equality \cite{St73}. 
  
\begin{Conjecture}[Strassen's]\label{sconj}
In the above notation let $T_1\in \left\langle \mathcal{S}^{(a_1,b_1,c_1)}\right\rangle, T_2\in \left\langle \mathcal{S}^{(a_2,b_2,c_2)}\right\rangle$ be two tensors. Then $\rank(T_1\oplus T_2) = \rank(T_1)+\rank(T_2)$.
\end{Conjecture}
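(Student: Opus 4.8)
The plan is to establish the nontrivial inequality $\rank(T_1\oplus T_2)\geq \rank(T_1)+\rank(T_2)$; the reverse inequality is immediate from (\ref{ineqstrass}), since juxtaposing optimal decompositions of $T_1$ and $T_2$ inside the complementary spans $\left\langle\mathcal{S}^{(a_1,b_1,c_1)}\right\rangle$ and $\left\langle\mathcal{S}^{(a_2,b_2,c_2)}\right\rangle$ decomposes $T_1\oplus T_2$. Write the ambient space as $A\otimes B\otimes C$, where $A,B,C$ are the three underlying vector spaces, and split $A=A_1\oplus A_2$, $B=B_1\oplus B_2$, $C=C_1\oplus C_2$ with $\dim A_i=a_i+1$ and so on, so that $T_1\in A_1\otimes B_1\otimes C_1$, $T_2\in A_2\otimes B_2\otimes C_2$, and $T:=T_1\oplus T_2$ has no components in the remaining blocks $A_i\otimes B_j\otimes C_k$.

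First I would extract the lower bound seen by flattenings. For the flattening $\widetilde{T}:A^*\to B\otimes C$ grouping the first factor against the other two, the direct-sum structure makes $\widetilde{T}$ block diagonal: $\widetilde{T}|_{A_1^*}=\widetilde{T_1}$ lands in $B_1\otimes C_1$, while $\widetilde{T}|_{A_2^*}=\widetilde{T_2}$ lands in $B_2\otimes C_2$, and these two blocks are independent inside $B\otimes C$. Hence $\dim\widetilde{T}(A^*)=\dim\widetilde{T_1}(A_1^*)+\dim\widetilde{T_2}(A_2^*)$, and the flattening bound of Section \ref{flat} gives
\[
\rank(T)\;\geq\;\dim\widetilde{T_1}(A_1^*)+\dim\widetilde{T_2}(A_2^*).
\]
Whenever some flattening already computes the rank of each summand, as in the regime of Proposition \ref{p3}, this closes the argument; the entire difficulty is concentrated in the fact that in general $\dim\widetilde{T_i}(A_i^*)$ is strictly smaller than $\rank(T_i)$.

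To upgrade this bound to the honest rank I would run the substitution method. Fixing an optimal decomposition $T=\sum_{i=1}^{r}u_i\otimes v_i\otimes w_i$ with $r=\rank(T)$ and slicing $T=\sum_j e_j\otimes S_j$ along a basis of $A$, the substitution lemma removes one basis vector of $A$ at the cost of exactly one unit of rank, replacing the surviving slices $S_j$ by suitable combinations $S_j-\lambda_j S_{j_0}$. The idea is to peel off, one direction at a time, the whole $A_2$ part of $T$, each step subtracting a rank; composing with the coordinate projection $A\otimes B\otimes C\to A_1\otimes B_1\otimes C_1$, which cannot increase rank and which returns the peeled tensor to $T_1$, would then yield $\rank(T)\geq(\text{number of peeling steps})+\rank(T_1)$, with the peeled steps meant to account for $\rank(T_2)$.

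The main obstacle is precisely this last bookkeeping. A single rank-one term $u_i\otimes v_i\otimes w_i$ whose factors have nonzero projection to both $A_1$ and $A_2$ (respectively to both parts of $B$, of $C$) can contribute simultaneously to the $T_1$-block and to the $T_2$-block, so the substitution scalars $\lambda_j$ must be controlled so that no such term is counted twice and so that the peeled directions account for the \emph{full} rank $\rank(T_2)$ rather than merely the $\dim A_2$ directions they physically remove. Guaranteeing that every optimal decomposition of $T_1\oplus T_2$ splits into a decomposition of $T_1$ and a decomposition of $T_2$ with no rank-one term shared across the two blocks is exactly the delicate point where unrestricted additivity is most resistant, and I expect this separation of the two blocks, rather than the flattening computation or the mechanical substitution steps, to be the crux.
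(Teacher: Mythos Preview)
The statement you are attempting to prove is labeled in the paper as a \emph{Conjecture}, not a theorem; the paper provides no proof of it, and indeed explicitly records that a counterexample has been found by Shitov, so that Strassen's additivity fails in general. There is therefore nothing in the paper to compare your proposal against, and any purported complete proof would contradict the known counterexample.

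Your write-up is in fact consistent with this: you correctly isolate the flattening bound (which only yields additivity under the extra hypothesis that some flattening computes the rank of each summand, exactly the content of Proposition~\ref{p1b}), and you correctly identify that the substitution method does not by itself force an optimal decomposition of $T_1\oplus T_2$ to split across the two blocks. But you do not close that gap; the final paragraph is an acknowledgment of the obstruction rather than a resolution of it. That obstruction is genuine and cannot be removed in full generality, since Shitov's example shows there exist $T_1,T_2$ with $\rank(T_1\oplus T_2)<\rank(T_1)+\rank(T_2)$. So the proposal should be read not as a proof but as a sketch of why the flattening and substitution techniques suffice only under additional hypotheses (small rank, rank equal to flattening rank, and the other special cases listed in the paper).
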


Even though Conjecture \ref{sconj} was originally stated in the context of triple tensors that is bilinear forms, with particular attention to the complexity of matrix multiplication, a number of generalizations are immediate. For instance, we could ask the same question for higher order tensors, symmetric tensors, mixed tensors and skew-symmetric tensors. It is also natural to ask for the analogue of Conjecture \ref{sconj} for border rank. This has been answered negatively \cite{Sc81}.

Conjecture \ref{sconj} and its analogues have been proven when either $T_1$ or $T_2$ has dimension at most two, when $\rank(T_1)$ can be determined by the so called substitution method \cite{LM17}, when $\dim(V_1) = 2$ both for the rank and the border rank \cite{BGL13}, when $T_1,T_2$ are symmetric that is homogeneous polynomials in disjoint sets of variables, either $T_1,T_2$ is a power, or both $T_1$ and $T_2$ have two variables, or either $T_1$ or $T_2$ has small rank \cite{CCC15}, and also for other classes of homogeneous polynomials \cite{CCO17}, \cite{Te15}. 

As for Comon's conjecture a counterexample to Strassen's conjecture has recentely been given by Y. Shitov \cite{Sh17b}. In this case Y. Shitov proved that over any infinite field there exist tensors $T_1,T_2$ such that the inequality in Conjecture \ref{sconj} is strict.

In what follows we give sufficient conditions for Strassen's conjecture, recovering a known result, and for its mixed and skew-symmetric analogues.

\begin{Proposition}\cite{IK99}\label{p1}
Let $V_1,V_2$ be $k$-vector spaces of dimensions $n+1,m+1$, and consider $V = V_1\oplus V_2$. Let $F\in \Sym^d(V_1)\subset\Sym^d(V)$ and $G\in \Sym^d(V_2)\subset\Sym^d(V)$ be two homogeneous polynomials. If there exists an integer $s>0$ such that 
$$\dim(H_{\partial^s F}) = \srk(F),\quad \dim(H_{\partial^s G}) = \srk(G)$$
then $\srk(F+G) = \srk(F)+\srk(G)$.
\end{Proposition}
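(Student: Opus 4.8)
The plan is to sandwich $\srk(F+G)$ between $\srk(F)+\srk(G)$ from above and from below. The upper bound $\srk(F+G)\le\srk(F)+\srk(G)$ is immediate from the subadditivity inequality~(\ref{ineqstrass}): writing $F$ and $G$ in the disjoint variable sets coming from $V_1$ and $V_2$, the Veronese varieties $\mathcal{V}_d^n$ and $\mathcal{V}_d^m$ attached to $F$ and $G$ span complementary linear subspaces of $\langle\mathcal{V}_d^{n+m+1}\rangle$, and $F+G$ lies on the line joining $F$ to $G$, so~(\ref{ineqstrass}) applies verbatim. Everything then reduces to producing the matching lower bound $\srk(F+G)\ge\srk(F)+\srk(G)$, and the hypothesis on catalecticant ranks is tailored precisely to deliver it.

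For the lower bound I would use the catalecticant (flattening) bound recalled in Section~\ref{flat} and Remark~\ref{pd}: for any homogeneous $T$ and any $s$, a symmetric decomposition of $T$ with $\srk(T)$ summands forces $H_{\partial^s T}$ into the span of $\srk(T)$ powers $L_i^{d-s}$, whence $\dim(H_{\partial^s T})\le\srk(T)$. Applying this to $T=F+G$ gives $\dim(H_{\partial^s(F+G)})\le\srk(F+G)$. It therefore suffices to show that, for the $s$ provided by the hypothesis, one has $\dim(H_{\partial^s(F+G)})=\dim(H_{\partial^s F})+\dim(H_{\partial^s G})$, since the right-hand side equals $\srk(F)+\srk(G)$ by assumption, thereby closing the sandwich $\srk(F)+\srk(G)=\dim(H_{\partial^s(F+G)})\le\srk(F+G)\le\srk(F)+\srk(G)$.

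The key computation is thus the additivity of catalecticant ranks across disjoint variables, and I expect this to be the only real content of the argument. Since $F$ involves only the $V_1$-variables $x_0,\dots,x_n$ and $G$ only the $V_2$-variables $y_0,\dots,y_m$, any order-$s$ partial derivative of $F+G$ taken with respect to at least one $x$-variable and at least one $y$-variable vanishes identically; the surviving derivatives are exactly the pure $x$-derivatives of $F$ and the pure $y$-derivatives of $G$. Hence $H_{\partial^s(F+G)}$ is spanned by $H_{\partial^s F}\subseteq k[x_0,\dots,x_n]_{d-s}$ together with $H_{\partial^s G}\subseteq k[y_0,\dots,y_m]_{d-s}$. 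For $0<s<d$ these two subspaces meet only in $0$ inside $k[x_0,\dots,x_n,y_0,\dots,y_m]_{d-s}$, because a nonzero form of positive degree cannot be simultaneously a polynomial in the $x$'s alone and in the $y$'s alone; the sum is therefore direct and the dimensions add. The main obstacle is really this direct-sum step together with its degenerate boundary: when $s=d$ the derivatives are constants and both spans lie inside the one-dimensional space of scalars, so the intersection need not be trivial. In that regime, however, the hypothesis $\dim(H_{\partial^d F})=\srk(F)$ forces $\srk(F)\le1$, so $F$ is a $d$-th power (or zero), and likewise for $G$, whence the statement reduces to the already-known power case. I would therefore dispose of $s=d$ separately and carry out the direct-sum argument only in the range $0<s<d$, after which the chain of inequalities above gives the claimed equality.
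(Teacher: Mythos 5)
Your proof is correct and follows essentially the same route as the paper: the trivial upper bound from subadditivity, combined with the lower bound $\srk(F)+\srk(G)=\dim(H_{\partial^s F})+\dim(H_{\partial^s G})=\dim(H_{\partial^s (F+G)})\leq\srk(F+G)$ via the catalecticant bound of Remark \ref{pd}. You merely spell out the direct-sum step for disjoint variable sets (and the degenerate case $s=d$) that the paper's one-line proof leaves implicit.
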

\begin{proof}
Clearly, $\srk(F+G) \leq \srk(F)+\srk(G)$ holds in general. On the other hand, our hypothesis yields
$$\srk(F)+\srk(G) = \dim(H_{\partial^s F})+\dim(H_{\partial^s F}) = \dim(H_{\partial^s F+G})\leq \srk(F+G)$$
where the last inequality follows from Remark \ref{pd}.
\end{proof}

\begin{Remark}
The argument used in the proof of Proposition \ref{p1} works for $F\in\mathbb{P}^{N(n,d)}$ general only if for the generic rank we have $\lfloor\frac{\tbinom{n+d}{d}}{n+1}\rfloor\leq\binom{n+\lfloor\frac{d}{2}\rfloor}{n}$. For instance, when $n = 3,d=6$ the generic rank is $21$ while the maximal dimension of the spaces spanned by partial derivatives is $20$.
\end{Remark}  

\begin{Proposition}\label{p1b}
Let $V_{1},\ldots,V_{p}$ and $W_{1},\ldots,W_{p}$ be $k$-vector spaces of dimension $n_{1}+1,\ldots,n_{p}+1$ and $m_{1}+1,\ldots,m_{p}+1$ respectively. Consider $U_{i}=V_{i}\oplus W_{i}$ for every $1\leq i\leq p$. Let $T_{1}\in \Sym^{d_{1}}V_{1}\otimes\dots\otimes \Sym^{d_{p}}V_{p}\subset \Sym^{d_{1}}U_{1}\otimes\dots\otimes \Sym^{d_{p}}U_{p}$ and $T_{2}\in \Sym^{d_{1}}W_{1}\otimes\dots\otimes \Sym^{d_{p}}W_{p}\subset \Sym^{d_{1}}U_{1}\otimes\dots\otimes \Sym^{d_{p}}U_{p}$ be two mixed tensors. 

If for any $i\in\{1,..,p\}$ there exists a pair $(a_{i},b_{i})$ with $a_{i}+b_{i}=d_{i}$ and $(A,B)$-flattenings $\widetilde{T}_1:V_A^{*}\rightarrow V_B$, $\widetilde{T}_2:V_A^{*}\rightarrow V_B$ as in (\ref{flat}) such that 
$$\dim(\widetilde{T}_1(V_A^{*}))=\rk(T_{1}), \quad  \dim(\widetilde{T}_2(V_A^{*}))=\rk(T_{2})$$ 
then $\rk(T_{1}+T_{2})=\rk(T_{1})+\rk(T_{2})$.
\end{Proposition}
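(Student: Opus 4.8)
The plan is to follow the strategy of Proposition~\ref{p1}, replacing the catalecticants by the prescribed $(A,B)$-flattenings and keeping track of the splitting $U_i=V_i\oplus W_i$. The upper bound $\rk(T_1+T_2)\leq\rk(T_1)+\rk(T_2)$ is immediate from (\ref{ineqstrass}): juxtaposing a rank decomposition of $T_1$ with one of $T_2$, both with respect to the Segre--Veronese variety of the $U_i$, expresses $T_1+T_2$ as a combination of $\rk(T_1)+\rk(T_2)$ rank one tensors. So the whole content lies in the reverse inequality, which, as in Proposition~\ref{p1}, will come from a single flattening whose rank is forced to be additive.

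First I would fix, for each $i$, the pair $(a_i,b_i)$ furnished by the hypothesis and form the associated flattening $\widetilde{T}\colon V_A^*\to V_B$ with $V_A=\Sym^{a_1}U_1\otimes\dots\otimes\Sym^{a_p}U_p$ and $V_B=\Sym^{b_1}U_1\otimes\dots\otimes\Sym^{b_p}U_p$; by construction this map is linear in the tensor, so $\widetilde{T_1+T_2}=\widetilde{T_1}+\widetilde{T_2}$. The decisive structural input is the decomposition $\Sym^{a_i}U_i=\bigoplus_{j=0}^{a_i}\Sym^jV_i\otimes\Sym^{a_i-j}W_i$, and likewise for $\Sym^{b_i}U_i$, which endows $V_A$ and $V_B$ with a multigrading. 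Since $T_1$ is supported on the pure-$V$ summand $\Sym^{d_1}V_1\otimes\dots\otimes\Sym^{d_p}V_p$, its image under comultiplication lies in $(\Sym^{a_1}V_1\otimes\dots\otimes\Sym^{a_p}V_p)\otimes(\Sym^{b_1}V_1\otimes\dots\otimes\Sym^{b_p}V_p)$; hence $\widetilde{T_1}$ factors through the projection of $V_A^*$ onto the dual of the pure-$V$ part and has image inside the pure-$V$ subspace $V_B^{(1)}:=\Sym^{b_1}V_1\otimes\dots\otimes\Sym^{b_p}V_p$. Symmetrically, $\widetilde{T_2}$ factors through the pure-$W$ projection and has image inside $V_B^{(2)}:=\Sym^{b_1}W_1\otimes\dots\otimes\Sym^{b_p}W_p$.

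Now I would observe that $V_B^{(1)}$ and $V_B^{(2)}$ are distinct summands of $V_B$, so $V_B^{(1)}\cap V_B^{(2)}=0$, while the pure-$V$ and pure-$W$ summands of $V_A$ are also independent, so the two projections of a covector can be prescribed independently. Consequently $\widetilde{T_1+T_2}(V_A^*)=\widetilde{T_1}(V_A^*)\oplus\widetilde{T_2}(V_A^*)$; equivalently, in bases adapted to the multigrading the flattening matrix of $T_1+T_2$ is block diagonal, the blocks being the flattenings of $T_1$ and $T_2$ sitting in disjoint rows and columns. This yields
$$\dim(\widetilde{T_1+T_2}(V_A^*))=\dim(\widetilde{T_1}(V_A^*))+\dim(\widetilde{T_2}(V_A^*))=\rk(T_1)+\rk(T_2),$$
the last equality being the hypothesis. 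Since the flattening rank is a lower bound for the tensor rank (Section~\ref{flat}), I conclude $\rk(T_1+T_2)\geq\rk(T_1)+\rk(T_2)$, and combining with the upper bound finishes the proof.

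I expect the one genuinely load-bearing step to be the block-diagonality: checking that $\widetilde{T_1}$ and $\widetilde{T_2}$ land in complementary summands of $V_B$ and draw on complementary summands of $V_A^*$, so that their ranks add rather than merely obey subadditivity. Everything else is the formal sandwich already used in Propositions~\ref{p3} and~\ref{p1}; the only real work is tracing the splitting $U_i=V_i\oplus W_i$ through the comultiplication defining the flattening.
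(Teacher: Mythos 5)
Your proof is correct and follows essentially the same route as the paper: the trivial subadditivity upper bound combined with the lower bound $\rk(T_1+T_2)\geq\dim(\widetilde{T_1+T_2}(V_A^*))=\dim(\widetilde{T_1}(V_A^*))+\dim(\widetilde{T_2}(V_A^*))$ from the chosen flattening. The only difference is that the paper asserts the additivity of the flattening ranks without comment, whereas you justify it via the block-diagonal structure coming from the splitting $U_i=V_i\oplus W_i$ --- a worthwhile addition, since that is the one non-formal step.
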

\begin{proof}
Clearly, $\rk(T_{1}+T_{2})\leq \rk(T_{1})+\rk(T_{2})$. On the other hand, our hypothesis yields 
$$\rk(T_{1})+\rk(T_{2})=\dim(\widetilde{T}_1(V_A^{*}))+\dim(\widetilde{T}_2(V_A^{*}))=\dim(\widetilde{T_{1}+T_{2}}(V_A^{*}))\leq \rk(T_{1}+T_{2})$$
where $\widetilde{T_{1}+T_{2}}$ denotes the $(A,B)$-flattening of the mixed tensor $T_1+T_2$.
\end{proof}

Arguing as in the proof of Proposition \ref{p1b} with skew-symmetric flattenings we have an analogous statement in the Segre-Grassmann setting.

\begin{Proposition}
Let $V_{1},\ldots,V_{p}$ and $W_{1},\ldots,W_{p}$ be $k$-vector spaces of dimension $n_{1}+1,\dots,n_{p}+1$ and $m_{1}+1,\dots,m_{p}+1$ respectively. Consider $U_{i}=V_{i}\oplus W_{i}$ for every $1\leq i\leq p$, and let $T_{1}\in\bigwedge^{d_{1}}V_{1}\otimes\dots\otimes\bigwedge^{d_{p}}V_{p}\subset\bigwedge^{d_{1}}U_{1}\otimes\dots\otimes\bigwedge^{d_{p}}U_{p}$ and $T_{2}\in\bigwedge^{d_{1}}W_{1}\otimes\dots\otimes\bigwedge^{d_{p}}W_{p}\subset\bigwedge^{d_{p}}U_{1}\otimes\dots\otimes\bigwedge^{d_{p}}U_{p}$ be two skew-symmetric tensors with $d_{i}\leq \min\{n_{i}+1,m_{i}+1\}$. 

If for any $i\in\{1,\ldots,p\}$ there exists a pair $(a_{i},b_{i})$ with $a_{i}+b_{i}=d_{i}$ and $(A,B)$-skew-flattenings $\widetilde{T}_1:V_A^{*}\rightarrow V_B$, $\widetilde{T}_2:V_A^{*}\rightarrow V_B$ as in (\ref{flat}) such that 
$$\dim(\widetilde{T}_1(V_A^{*}))=\rk(T_{1}), \quad  \dim(\widetilde{T}_2(V_A^{*}))=\rk(T_{2})$$ 
then $\rk(T_{1}+T_{2})=\rk(T_{1})+\rk(T_{2})$.
\end{Proposition}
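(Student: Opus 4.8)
The plan is to mimic the proof of Proposition \ref{p1b} verbatim, replacing ordinary flattenings with skew-flattenings throughout. The structure is the classical sandwich argument: one inequality is the subadditivity of rank, which holds unconditionally, and the reverse inequality is extracted from a single well-chosen skew-flattening whose rank detects the ranks of $T_1$ and $T_2$ separately by hypothesis.

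First I would record the easy inequality $\rk(T_1+T_2)\leq \rk(T_1)+\rk(T_2)$. This is exactly (\ref{ineqstrass}) applied with $X=\mathcal{SG}_{\underline{d}}^{\underline{n}}$ over $U_1,\dots,U_p$, $Y$ the Segre-Grassmann variety over $V_1,\dots,V_p$, and $Z$ the one over $W_1,\dots,W_p$: since the decomposition $U_i=V_i\oplus W_i$ places $T_1$ and $T_2$ in the spans of disjoint subvarieties, concatenating an optimal skew decomposition of $T_1$ with one of $T_2$ gives a skew decomposition of $T_1+T_2$ of the desired length. The constraints $d_i\leq \min\{n_i+1,m_i+1\}$ guarantee that the relevant exterior powers $\bigwedge^{d_i}V_i$ and $\bigwedge^{d_i}W_i$ are nonzero, so both tensors genuinely live in their asserted ambient spaces.

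Next I would prove the reverse inequality. Assembling the chosen pairs $(a_i,b_i)$ into the index sets $A,B$ as in Subsection \ref{flat}, I consider the $(A,B)$-skew-flattening $\widetilde{T_1+T_2}:V_A^{*}\to V_B$. Since flattening is a linear operation, $\widetilde{T_1+T_2}=\widetilde{T}_1+\widetilde{T}_2$. The decomposition $U_i=V_i\oplus W_i$ induces a block decomposition $\bigwedge^{a_i}U_i=\bigwedge^{a_i}V_i\oplus(\text{mixed terms})\oplus\bigwedge^{a_i}W_i$, and because $T_1$ is built purely from the $V_i$ and $T_2$ purely from the $W_i$, the images $\widetilde{T}_1(V_A^{*})$ and $\widetilde{T}_2(V_A^{*})$ sit in complementary summands of $V_B$ and meet only in $0$. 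Hence the images add directly and
$$\dim(\widetilde{T_1+T_2}(V_A^{*}))=\dim(\widetilde{T}_1(V_A^{*}))+\dim(\widetilde{T}_2(V_A^{*}))=\rk(T_1)+\rk(T_2),$$
using the hypothesis for the final equality. Since any skew-flattening is in particular a flattening, the general inequality $\rk(T_1+T_2)\geq \dim(\widetilde{T_1+T_2}(V_A^{*}))$ from the discussion in Subsection \ref{flat} then closes the sandwich and yields $\rk(T_1+T_2)=\rk(T_1)+\rk(T_2)$.

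The one step that requires genuine care, rather than routine imitation, is the claim that the two images are independent inside $V_B$, that is, that $\widetilde{T}_1(V_A^{*})\cap\widetilde{T}_2(V_A^{*})=0$. In the symmetric setting of Proposition \ref{p1b} this is transparent because $\Sym^{b_i}U_i$ splits with $\Sym^{b_i}V_i$ and $\Sym^{b_i}W_i$ as distinguished summands; in the exterior setting one must check that the analogous direct-sum decomposition of $\bigwedge^{b_i}U_i$ isolates $\bigwedge^{b_i}V_i$ and $\bigwedge^{b_i}W_i$ as distinct summands of the bigraded decomposition $\bigwedge^{b_i}(V_i\oplus W_i)=\bigoplus_{j}\bigwedge^{j}V_i\otimes\bigwedge^{b_i-j}W_i$. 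This is where the hypothesis $d_i\leq\min\{n_i+1,m_i+1\}$ again matters, ensuring the pure summands $\bigwedge^{b_i}V_i$ (the $j=b_i$ piece) and $\bigwedge^{b_i}W_i$ (the $j=0$ piece) are nonzero and occupy different bidegrees, so they intersect trivially. Once this transversality is established the rest of the argument is formal.
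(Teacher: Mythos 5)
Your proof is correct and follows exactly the route the paper intends: the paper gives no separate argument for this proposition, stating only that one argues as in Proposition \ref{p1b} with skew-flattenings, which is precisely the sandwich argument you carry out. Your extra paragraph justifying that $\widetilde{T}_1(V_A^{*})$ and $\widetilde{T}_2(V_A^{*})$ occupy complementary summands of $V_B$ via the bigraded decomposition of $\bigwedge^{b_i}(V_i\oplus W_i)$ makes explicit a step the paper leaves implicit, but it is the same argument.
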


\section{On the rank of $x_0F$}\label{sec:RNC}
In this section, building on Remark \ref{embdeg}, we present new cases in which Comon's conjecture holds. Recall, that for a smooth point $x\in X$, the \textit{$a$-osculating space} $\mathbb{T}_x^{a}X$ of $X$ at $x$ is roughly the smaller linear subspace locally approximating $X$ up to order $a$ at $x$, and the \textit{$a$-osculating variety} $T^{a}X$ of $X$ is defined as the closure of the union of all the osculating spaces
$$
T^{a}X = \overline{{\bigcup_{x\in X}}\mathbb{T}^{a}_{x}X}
$$
For any $1\leq a\leq d-1$ the osculating space
$\mathbb{T}^{a}_{[L^{d}]}\mathcal{V}^n_d$ of order $a$ at the point $[L^{d}]\in V_{d}$ can be written as  
$$
\mathbb{T}^{a}_{[L^{d}]}\mathcal{V}^n_d = \left\langle L^{d-a}F \; | \; F\in k[x_0,\ldots,x_n]_{a}\right\rangle\subseteq\mathbb{P}^{N}
$$
Equivalently, $\mathbb{T}^{a}_{[L^{d}]}\mathcal{V}^n_d$ is the space of homogeneous polynomials whose derivatives of order less than or equal to $a$ in the direction given by the linear form $L$ vanish. Note that
$\dim(\mathbb{T}^{a}_{[L^{d}]}\mathcal{V}^n_d) = {{n+a}\choose{n}}-1$ and $\mathbb{T}^{b}_{[L^{d}]}\mathcal{V}^n_d\subseteq\mathbb{T}^{a}_{[L^{d}]}\mathcal{V}^n_d$ for any $b\leq a$. Moreover, for any $1\leq a\leq d$ and $[L^{d}]\in \mathcal{V}^n_d$ we can embed a copy of $\mathcal{V}^n_a$ into the osculating space $\mathbb{T}^{a}_{[L^{d}]}\mathcal{V}^n_d$ by considering 
$$\mathcal{V}^n_a = \{L^{d-a}M^{a}\mid M\in k[x_0,\ldots,x_n]_1\}\subseteq \mathbb{T}^{a}_{[L^{d}]}\mathcal{V}^n_d$$ 

\begin{Remark}\label{remfun}
Let us expand the ideas in Remark \ref{embdeg}. We can embed  
$$\mathcal{V}^n_d = \{x_0 L^{d}\mid L\in k[x_0,\ldots,x_n]_1\}\subseteq \mathbb{T}^{d}_{[x_0^{d}]}\mathcal{V}^n_{d+1}$$ 
and Remark \ref{embdeg} yields that
\stepcounter{thm}
\begin{equation}\label{cont}
\mathbb{S}ec_{h}(\mathcal{V}^n_{d})\subseteq\mathbb{S}ec_{2h}(\mathcal{V}^n_{d+1})\cap
\mathbb{T}^{d}_{[L^{d+1}]}\mathcal{V}^n_{d+1}
\end{equation}
This embedding extends to an embedding at the level of Segre varieties, and, in the notation of Remark \ref{embdeg}, we have that $\mathbb{S}ec_{h}(\mathcal{S}^{n_1})\subseteq\mathbb{S}ec_{2h}(\mathcal{S}^{n})$.

Assume that for a polynomial $F\in \mathbb{S}ec_{h}(\mathcal{V}^n_{d})$ we have $F\in\mathbb{S}ec_{h-1}(\mathcal{S}^{n_1})$. Then $x_0F \in\mathbb{S}ec_{2h-2}(\mathcal{S}^{n})$. Now, if we find a determinantal equation of $\mathbb{S}ec_{2h-2}(\mathcal{V}^n_{d+1})$ coming as the restriction to $\Pi$, the space of symmetric tensors, of a determinantal equation of $\mathbb{S}ec_{2h-2}(\mathcal{S}^{n})$, and not vanishing at $x_0F$ then $x_0F \notin\mathbb{S}ec_{2h-2}(\mathcal{S}^{n})$ and hence $F\notin\mathbb{S}ec_{h-1}(\mathcal{S}^{n_1})$ proving Comon's conjecture for $F$. 

This will be the leading idea to keep in mind in what follows. The determinantal equations involved will always come from minors of suitable catalecticant matrices, that can be therefore seen as the restriction to $\Pi$ of determinantal equations for the secants of the Segre coming from non symmetric flattenings.        
\end{Remark}

It is easy to give examples where the inequality (\ref{cont}) is strict. When $n = 1$ the generic rank is $g_d=\lceil\frac{d+1}{2}\rceil$. Then for $d$ odd we have $g_d = g_{d-1}$ while for $d$ even we have $g_d=g_{d-1}+1$. Hence $\rk_{\mathcal{V}_d}x_0F<2 \rk_{\mathcal{V}_{d-1}}F$ if $2\rk_{\mathcal{V}_{d-1}}F>\frac{g_d}{2}$, where $\mathcal{V}_d:=\mathcal{V}_d^1$ is the rational normal curve. It is natural to ask if the inequality is indeed an equality as long as the rank is subgeneric. In the case $n = 1$ we have the following result.

\begin{Proposition}\label{p5}
Let $\mathcal{V}_d:=\mathcal{V}_d^1$ be the degree $d$ rational normal curve. If $2h<g_{d+1}$ then there does not exist $k_{h}>0$ such that $\mathbb{S}ec_{h}(\mathcal{V}_{d})\subseteq\mathbb{S}ec_{2h-k_h}(\mathcal{V}_{d+1})\cap \mathbb{T}^{d}_{[x^{d+1}]}\mathcal{V}_{d+1}$.
\end{Proposition}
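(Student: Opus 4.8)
The plan is to reduce the statement to a single general form and to compute a border rank via apolarity. Since $\sec_{2h-k_h}(\mathcal V_{d+1})\subseteq\sec_{2h-1}(\mathcal V_{d+1})$ for every $k_h\ge 1$ and secant varieties are closed, it suffices to exhibit one $F\in\sec_h(\mathcal V_d)$ with $\brk_{\mathcal V_{d+1}}(x_0F)=2h$, that is with $x_0F\notin\sec_{2h-1}(\mathcal V_{d+1})$. Write $S=k[x_0,x_1]$, let $T=k[\partial_0,\partial_1]$ act on $S$ by differentiation, and recall the classical fact that for $G\in S_D$ the border rank $\brk_{\mathcal V_D}(G)$ equals the initial degree of the apolar ideal $G^{\perp}\subset T$, which is a complete intersection $(g_1,g_2)$ with $\deg g_1+\deg g_2=D+2$. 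Fix $F=\sum_{i=1}^{h}\lambda_iL_i^{d}$ with the $L_i$ pairwise non-proportional, $\lambda_i\neq 0$, and let $\ell_i\in T_1$ be the operator with $\ell_i\circ L_i=0$.

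\emph{Upper bound.} A direct computation gives $\ell_i\circ(x_0L_i^{d})=\ell_i(x_0)L_i^{d}$, hence $\ell_i^{2}\circ(x_0L_i^{d})=0$. As the $\ell_i$ commute, $\prod_{i=1}^{h}\ell_i^{2}$ kills every summand of $x_0F$, so it is a degree-$2h$ element of $(x_0F)^{\perp}$ and $\brk_{\mathcal V_{d+1}}(x_0F)\le 2h$; this merely reproves (\ref{cont}) in the present case.

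\emph{Lower bound (the crux).} Assume $\brk_{\mathcal V_{d+1}}(x_0F)=r<2h$, so $\deg g_1=r$. From $2h<g_{d+1}=\lceil (d+2)/2\rceil$ one gets $4h<d+3$, hence $2h<\deg g_2=d+3-r$; therefore in degree $2h$ the ideal $(g_1,g_2)$ coincides with $g_1\cdot T_{2h-r}$, and the element $\prod_i\ell_i^{2}$ is forced to be a multiple of $g_1$. Since the $\ell_i$ are pairwise non-proportional linear forms, this gives $g_1=\prod_i\ell_i^{a_i}$ with $a_i\in\{0,1,2\}$ and $\sum_i a_i=r<2h$, so some $a_j\le 1$. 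Evaluating $g_1\circ(x_0F)=\sum_j\lambda_j\,g_1\circ(x_0L_j^{d})$, the summands with $a_j=2$ vanish because $\ell_j^{2}\mid g_1$; for the indices with $a_j\le 1$ an induction on the number of factors applied, using $\ell_i\circ L_j^{m}=m\,\ell_i(L_j)L_j^{m-1}$ and $\ell_i(L_j)\neq 0$ for $i\neq j$, shows that $v_j:=g_1\circ(x_0L_j^{d})$ is a nonzero vector of the tangent space $\mathbb T^{1}_{[L_j^{\,d+1-r}]}\mathcal V_{d+1-r}$. Because the $L_j$ are general and $4h<d+3$ yields $d+2-r>2h$, these tangent spaces are in direct sum; hence the nonzero vectors $\lambda_jv_j$ are linearly independent and $g_1\circ(x_0F)\neq 0$, contradicting $g_1\in(x_0F)^{\perp}$. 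Thus $r=2h$.

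Combining the two bounds, $\brk_{\mathcal V_{d+1}}(x_0F)=2h$, so $x_0F\notin\sec_{2h-1}(\mathcal V_{d+1})$ and the claimed inclusion fails for every $k_h>0$. The genuine difficulty lies entirely in the lower bound, and within it in the two assertions that each $v_j$ is nonzero and that the $v_j$ are linearly independent: these are precisely where both the generality of the $L_i$ and the hypothesis $2h<g_{d+1}$ (which simultaneously yields $2h<\deg g_2$ and enough projective room for the tangent spaces to stay in direct sum) are indispensable. The upper bound is formal and already recorded in (\ref{cont}).
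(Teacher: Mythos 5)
Your proof is correct, but it travels a genuinely different road from the paper's. The paper argues synthetically: it takes a general $p\in\sec_h(\mathcal V_d)$, notes that multiplication by $x_0$ places it in the span $H$ of $h$ tangent lines of $\mathcal V_{d+1}$ (so $\dim H=2h-1$), and shows that if $p$ also lay on a $(2h-2)$-plane $H'$ meeting $\mathcal V_{d+1}$ in $2h-1$ points counted with multiplicity, then $\left\langle H,H'\right\rangle$ together with enough general points of the curve would produce a hyperplane meeting the degree-$(d+1)$ rational normal curve in $d+2$ points, which is absurd. You instead dualize: you identify $\brk_{\mathcal V_{d+1}}(x_0F)$ with the initial degree of the apolar ideal, observe that the hypothesis $2h<g_{d+1}$ forces $\deg g_2>2h$ so that any generator of degree $r<2h$ must divide $\prod_i\ell_i^2$, and then rule out every such proper divisor by showing it fails to annihilate $x_0F$, via the independence of tangent lines to the lower rational normal curve $\mathcal V_{d+1-r}$. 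Both arguments ultimately rest on the same fact (a linear space of dimension $k$ meets $\mathcal V_D$ in at most $k+1\leq D$ points with multiplicity, equivalently such divisors impose independent conditions), but your version makes the role of the numerical hypothesis more transparent and localizes the real work in two concrete claims ($v_j\neq 0$ and the $v_j$ independent), whereas the paper's dimension count is shorter but terser in its multiplicity bookkeeping. Two small points you should make explicit: you need the $L_i$ not proportional to $x_0$ (not merely pairwise non-proportional) so that $\ell_j(x_0)\neq 0$ in the case $a_j=1$ and so that $x_0L_j^{d-r}$ and $L_j^{d+1-r}$ are independent in the case $a_j=0$; this is harmless since one point of $\sec_h(\mathcal V_d)$ suffices and you may take the $L_i$ general. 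Also note that the direct-sum statement for the tangent lines holds for any distinct points once $2h<d+2-r$, so generality is not actually needed at that step.
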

\begin{proof}
Clearly, it is enough to prove the statement for $k_{h}=1$. Let $p\in\sec_{h}(\mathcal{V}_{d})$ be a general point. Then $p\in \left\langle [x_{0}L_{1}^{d}],\dots,[x_{0}L_{h}^{d}]\right\rangle$ with $L_{i}$ general linear forms. In particular 
$$p \in H:=\left\langle\mathbb{T}_{[L_{1}^{d+1}]}\mathcal{V}_{d+1},\dots,\mathbb{T}_{[L_{h}^{d+1}]}\mathcal{V}_{d+1}\right\rangle$$ 
Note that $\dim(H) = 2h-1$. Now, assume that $p$ is contained also in $\sec_{2h-1}(\mathcal{V}_{d+1})$. Then there exists a linear subspace $H'\subset\mathbb{P}^{d+1}$ of dimension $2h-2$ passing through $p$ intersecting $\mathcal{V}_{d+1}$ at $2h-1$ points $q_1,\dots,q_r$ counted with multiplicity. Let $q_{i_1},\dots,q_{i_r}$ be the points among the $q_i$ coinciding with some of the $[L_i^{d+1}]$ and such that the intersection multiplicity of $H'$ and $\mathcal{V}_{d+1}$ at $q_{i_j}$ is one, and $q_{j_1},\dots,q_{j_r}$ be the points among the $q_i$ coinciding with some of the $[L_i^{d+1}]$ and such that the intersection multiplicity of $H'$ and $\mathcal{V}_d$ at $q_{j_k}$ is greater that or equal to two.

Set $\Pi := \left\langle H,H'\right\rangle$, then $\dim(\Pi) = 2h-1+2h-2-i_r-2j_r$ and $\Pi$ intersects $\mathcal{V}_{d+1}$ at $2h+(2h-1-i_r-2j_r)$ points counted with multiplicity. Consider general points $b_1,\dots,b_s \in \mathcal{V}_{d+1}$ with $s = i_r+2j_r$, and the linear space $\Pi' = \left\langle \Pi,b_1,\dots,b_s\right\rangle$. Therefore, $\dim(\Pi') = 4h-3$ and $\Pi'$ intersects $\mathcal{V}_{d+1}$ at $4h-1$ points counted with multiplicity. Since $2h\leq \frac{d+3}{2}$ adding enough general points to $\Pi'$ we may construct a hyperplane in $\mathbb{P}^{d+1}$ intersecting $\mathcal{V}_{d+1}$ at $d+2$ points counted with multiplicity, a contradiction. 
\end{proof}

Proposition~\ref{p5} can be applied to get results on the rank of a special class of matrices called Hankel matrices.

Let $F=Z_{0}x_{0}^{d}+\ldots+\tbinom{d}{d-i}Z_{i}x_{0}^{d-i}x_{1}^{i}+\ldots+Z_{d}x_{1}^{d}$
be a binary form and consider $[Z_{0},\ldots,Z_{d}]$ as homogeneous coordinates on $\mathbb{P}(k[x_0,x_1]_d)$. Furthermore, consider the matrices
$$
M_{2n} = \left(\begin{array}{ccc}
Z_{0} & \dots & Z_{n}\\
\vdots &\ddots & \vdots\\
Z_{n} & \dots & Z_{d}
\end{array}\right),
\quad
M_{2n+1} = \left(\begin{array}{ccccc}
Z_{0} & \dots & Z_{n}\\
\vdots & \ddots & \vdots\\
Z_{n+1} & \dots & Z_{d}
\end{array}\right)
$$
It is well known that the ideal of $\sec_{h}(V_{d})$ is cut out by the
minors of $M_d$ of size $(h+1)\times (h+1)$ \cite{LO15}.

Now, consider a polynomial $F\in k[x_{0},x_{1}]_{d}$ with
homogeneous coordinates $[Z_{0},\ldots,Z_{d}]$. Then $F' := x_{0}F\in k[x_{0},x_{1}]_{d+1}$ has homogeneous coordinates $[Z_{0}',\ldots,Z_{d+1}']$ with 
$$Z_{i}'=\frac{d+1-i}{d+1}Z_{i}$$ 

In order to determine the rank of $F'$ we have to relate the rank of the matrices 
$$
N_{2n} = \left(\begin{array}{cccc}
Z_{0} & \frac{d}{d+1}Z_{1} & \dots & \frac{d+1-n}{d+1}Z_n\\
\frac{d}{d+1}Z_{1} & \dots  & \dots & \frac{d-n}{d+1}Z_{n+1}\\
\vdots & \ddots & \ddots & \vdots\\
\frac{d-n+2}{d+1}Z_{n-1} & \dots  & \dots & \frac{1}{d+1}Z_{d}\\
\frac{d-n+1}{d+1}Z_n & \dots & \frac{1}{d+1}Z_{d} & 0
\end{array}\right)
$$
$$
N_{2n+1} = \left(\begin{array}{cccc}
Z_{0} & \frac{d}{d+1}Z_{1} & \dots & \frac{d-n}{d+1}Z_{n}\\
\frac{d}{d+1}Z_{1} & \dots  & \dots & \frac{d-n-1}{d+1}Z_{n+2}\\
\vdots & \ddots & \ddots & \vdots\\
\frac{d-n+2}{d+1}Z_{n} & \dots  & \dots & \frac{1}{d+1}Z_{d}\\
\frac{d-n+1}{d+1}Z_{n+1} & \dots & \frac{1}{d+1}Z_{d} & 0
\end{array}\right)
$$
with the rank of $M_{d}$. 

\begin{Definition}
A matrix $A =(A_{i,j})\in M(a,b)$ such that $A_{i,j}=A_{h,k}$ whenever $i+j=h+k$ is called a \textit{Hankel matrix}.
\end{Definition}
In particular all the matrices of the form $M_d$ and $N_d$ considered above are Hankel matrices. 

Let $M(a,b)$ be the vector space of $a\times b$ matrices with coefficients in the base field $k$. For any $h\leq \min\{a,b\}$ let $Rank_{r}(M(a,b))\subseteq M(a,b)$ be the subvariety consisting of all matrices of rank at most $h$.
 
Now, consider the map $\beta:\mathbb{N}\longrightarrow\mathbb{N}\times\mathbb{N}$ given by $\beta(2n)=(n+1,n+1)$ and $\beta(2n+1)=(n+2,n+1)$. For any $d\geq 1$ we can view the subspace $H_{d}\subseteq M(\beta(d))$ formed by matrices of the form $M_d$ as the subspace of Hankel matrices. Now, given any linear morphism $f:M(a,b)\rightarrow M(c,d)$ we can ask if for some $s\leq \min\{c,d\}$ we have $f(Rank_{h}(M(a,b)))\subseteq Rank_{s}(M(c,d))$.
 
\begin{Corollary}
	Consider the linear morphism
	$$
	\begin{array}{cccc}
	\alpha_{d}: &M(\beta(d))& \longrightarrow & M(\beta(d+1))\\
	& (A_{i,j}) & \longmapsto & \left(\frac{d-(i+j-3)}{d+1}A_{i,j}\right)
	\end{array}
	$$
	Then $\alpha_{d}(H_{d})\subseteq H_{d+1}$ and $\alpha_d(Rank_{h}(M(\beta(d))\cap M_{d}))\subseteq Rank_{2h}(M(\beta(d+1)))\cap M_{d+1}$.
\end{Corollary}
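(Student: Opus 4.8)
The plan is to reduce both assertions to the geometric containment (\ref{cont}) via the dictionary between Hankel matrices and secant varieties of the rational normal curve. First I would dispatch the structural claim $\alpha_d(H_d)\subseteq H_{d+1}$: the scaling factor $\frac{d-(i+j-3)}{d+1}=\frac{d+3-i-j}{d+1}$ depends on $(i,j)$ only through the sum $i+j$, so multiplying a matrix whose entries are constant along anti-diagonals again yields one constant along anti-diagonals, and Hankel matrices go to Hankel matrices. The same computation identifies the image exactly. Writing $F$ with coordinates $[Z_0,\dots,Z_d]$, the entry of $\alpha_d(M_d)$ on the anti-diagonal $i+j-2=k$ is $\frac{d+1-k}{d+1}Z_k=Z_k'$, precisely the $k$-th coordinate of $x_0F$ recorded earlier; and on the new top anti-diagonal $i+j-2=d+1$ the factor $\frac{d+3-i-j}{d+1}$ vanishes, reproducing the boundary zero of $N_{d+1}$. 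Hence $\alpha_d(M_d)=M_{d+1}(x_0F)$, the catalecticant (Hankel) matrix of the degree $d+1$ form $x_0F$.

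For the rank bound I would invoke the classical fact recalled above from \cite{LO15}, that $\sec_h(\mathcal{V}_d)$ is cut out exactly by the $(h+1)\times(h+1)$ minors of $M_d$. Under this dictionary $M_d\in Rank_h(M(\beta(d)))$ if and only if $F\in\sec_h(\mathcal{V}_d)$, and $M_{d+1}(x_0F)\in Rank_{2h}(M(\beta(d+1)))$ if and only if $x_0F\in\sec_{2h}(\mathcal{V}_{d+1})$. Thus the inclusion $\alpha_d(Rank_h\cap M_d)\subseteq Rank_{2h}\cap M_{d+1}$ is, after the identification $\alpha_d(M_d)=M_{d+1}(x_0F)$, literally the statement that $F\in\sec_h(\mathcal{V}_d)$ forces $x_0F\in\sec_{2h}(\mathcal{V}_{d+1})$, i.e. the containment (\ref{cont}) in the case $n=1$. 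That containment in turn follows because a general $F\in\sec_h(\mathcal{V}_d)$ writes as $\sum_{i=1}^h\lambda_iL_i^d$, so $x_0F=\sum_i\lambda_i x_0L_i^d$ is a sum of $h$ tensors each lying on the tangent line to $\mathcal{V}_{d+1}$ at $[L_i^{d+1}]$ and hence of border rank at most $2$; passing to the closure gives $x_0F\in\sec_{2h}(\mathcal{V}_{d+1})$, and since $Rank_h\cap M_d$ and $Rank_{2h}\cap M_{d+1}$ are both closed the inclusion holds on the whole locus, not merely at a general point.

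I expect the only delicate point to be the index bookkeeping across the change of shape $\beta(d)\to\beta(d+1)$: one must verify that the single scaling formula simultaneously rescales each pre-existing anti-diagonal by the correct factor $\frac{d+1-k}{d+1}$ and manufactures the extra zero anti-diagonal $k=d+1$ that distinguishes the catalecticant of a form of the shape $x_0F$ from that of a generic degree $d+1$ form. Once $\alpha_d$ is matched entry-by-entry with $M_{d+1}(x_0F)$ as above, both claims of the Corollary become immediate consequences of (\ref{cont}) together with the minors-cut-out-secants theorem, with no further computation needed.
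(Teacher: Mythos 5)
Your proposal is correct and follows essentially the same route as the paper's proof: show the scaling factor depends only on $i+j$ so Hankel matrices map to Hankel matrices, identify $\alpha_d(M_d)$ with the catalecticant of $x_0F$, translate rank loci into secant varieties via the classical minors description from \cite{LO15}, and conclude from the containment (\ref{cont}). If anything you are more careful than the paper, making the anti-diagonal bookkeeping and the closure argument explicit, and correctly sourcing the identification $Rank_h\cap H_d=\sec_h(\mathcal{V}_d)$ to the \cite{LO15} fact rather than to Proposition \ref{p5} as the paper's proof (apparently by a slip) does.
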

\begin{proof}
	Since $\alpha_{d}(A_{i,j})=\alpha_{d}(A_{h,k})$ when $i+j=h+k$ we have that $\alpha_{d}(H_{d})\subseteq H_{d+1}$. By Proposition \ref{p5} $Rank_{h}(M(\beta(d))\cap H_{d} = \sec_{h}(V_{d})$, and by construction $\alpha_{d}(M_{d})$ is the linear change of coordinates mapping a binary form $F\in k[x_0,x_1]_{d}$ to $F'=x_{0}F\in k[x_0,x_1]_{d+1}$.
	
Since $\mathbb{S}ec_{h}(V_{d})\subseteq\mathbb{S}ec_{2h}(V_{d+1})\cap \mathbb{T}^{d}_{[x^{d+1}]}V_{d+1}$, if an $h\times h$ minor of a general matrix $B$ in $M(\beta(d))$ does not vanish, under the assumption that all the $(h+1)\times (h+1)$ minors of $B$ vanish, then there is a $2h\times 2h$ minor of $\alpha_d(B)$ that does not vanish. 
\end{proof}

When $n\geq 2$ we are able to determine, via Macaulay2 \cite{Mc2} aided methods, the rank of $x_0 F$ in some special cases.

\begin{subsubsection}{$(n,d)=(2,2)$}        
The variety  $\mathbb{S}ec_{3}(\mathcal{V}_3^2)$ is the hypersurface
in $\mathbb{P}^9$ cut out by the Aronhold invariant, see for instance
\cite[Section 1.1]{LO15}. With a Macaulay2 computation we prove that if
$F\in\mathbb{S}ec_{2}(\mathcal{V}_2^2)$ is general then the Aronhold
invariant does not vanish at $x_0F$, hence $\rk x_0F=2\rk F$.  
\end{subsubsection}
	
\begin{subsubsection}{$(n,d)=(2,3)$}
The varieties $\mathbb{S}ec_{5}(\mathcal{V}_4^2)$ and
$\mathbb{S}ec_{3}(\mathcal{V}_3^2)$ are both hypersurfaces, given
respectively by the determinant of the catalecticant matrix of second
partial derivatives and the Aronhold invariant
\cite[Section 1.1]{LO15}. With Macaulay2 we prove that the determinant
of the second catalecticant matrix does not vanish at $x_0F$ for $F\in
\mathbb{S}ec_{3}(\mathcal{V}_3^2)$ general, hence $\rk x_0F=2\rk F$.   
\end{subsubsection}

\begin{subsubsection}{$(n,d)=(3,3)$}\label{33}
The secant variety $\mathbb{S}ec_{9}(\mathcal{V}_4^3)$ is the
hypersurface cut out by the second catalecticant matrix
\cite[Section 1.1]{LO15} while $\mathbb{S}ec_{5}(\mathcal{V}_3^3)$
is the entire osculating space. A Macaulay2 computation shows that
$\mathbb{T}_{[x_{0}^{4}]}^{3}\mathcal{V}_4^3\subseteq\mathbb{S}ec_{9}(\mathcal{V}_4^3)$. This
proves that $\rk x_0F<2\rk F$, for $F$ general.
\end{subsubsection}

\begin{subsubsection}{$(n,d)=(4,3)$}\label{34}
In this case
$\mathbb{S}ec_{8}(\mathcal{V}_3^4)=\mathbb{T}_{[x_{0}^{4}]}^{3}\mathcal{V}_4^4$
and $\mathbb{S}ec_{14}(\mathcal{V}_4^4)$ is given by the determinant
of the second catalecticant matrix \cite[Section 1.1]{LO15}. Again
using Macaulay2 we show that
$\mathbb{T}_{[x_{0}^{4}]}^{3}\mathcal{V}_4^4\subseteq
\mathbb{S}ec_{14}(\mathcal{V}_4^4)$.  This
proves that $\rk x_0F<2\rk F$, for $F$ general.
\end{subsubsection}

\begin{Corollary}
For the osculating varieties $T^{3}\mathcal{V}_4^3$ and $T^{3}\mathcal{V}_4^4$ we have 
$$T^{3}\mathcal{V}_4^3\subseteq\mathbb{S}ec_{9}(\mathcal{V}_4^3), \quad T^{3}\mathcal{V}_4^4\subseteq\mathbb{S}ec_{14}(\mathcal{V}_4^4)$$
\end{Corollary}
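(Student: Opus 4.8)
The plan is to upgrade the single-point containments $\mathbb{T}^{3}_{[x_0^{4}]}\mathcal{V}_4^3\subseteq\mathbb{S}ec_{9}(\mathcal{V}_4^3)$ and $\mathbb{T}^{3}_{[x_0^{4}]}\mathcal{V}_4^4\subseteq\mathbb{S}ec_{14}(\mathcal{V}_4^4)$, established by the Macaulay2 computations in the cases $(n,d)=(3,3)$ and $(n,d)=(4,3)$, to the whole osculating variety by a homogeneity argument; all the genuine computational content has already been carried out there, and what remains is purely formal. The key point is that the group $GL_{n+1}$ of linear changes of variables on $k[x_0,\dots,x_n]_1$ acts on $\mathbb{P}(k[x_0,\dots,x_n]_d)$ preserving both the Veronese variety and each of its secant varieties, while permuting the osculating spaces; since the Veronese is homogeneous under this action, a containment checked at the single point $[x_0^{4}]$ automatically holds at every point $[L^{4}]$.

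First I would record the equivariance. A linear substitution $g\in GL_{n+1}$ sends $L^{d}$ to $(g\cdot L)^{d}$, so $g$ preserves $\mathcal{V}^n_d$; as $\mathbb{S}ec_{h}(\mathcal{V}^n_d)$ is the Zariski closure of a union of linear spans of points of $\mathcal{V}^n_d$ and $g$ acts as a projective automorphism carrying $\mathcal{V}^n_d$ to itself, each secant variety is $g$-invariant. Because osculating spaces are intrinsic to the embedded smooth variety $\mathcal{V}^n_d$, the same $g$ carries them equivariantly, that is $g\cdot\mathbb{T}^{a}_{[L^{d}]}\mathcal{V}^n_d=\mathbb{T}^{a}_{[(g\cdot L)^{d}]}\mathcal{V}^n_d$.

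Finally I would invoke transitivity: $GL_{n+1}$ acts transitively on the nonzero linear forms up to scalar, hence on the points $[L^{d}]$ of $\mathcal{V}^n_d$, so for each $L$ there is a $g$ with $g\cdot[x_0^{4}]=[L^{4}]$. Applying such a $g$ to the inclusion $\mathbb{T}^{3}_{[x_0^{4}]}\mathcal{V}_4^3\subseteq\mathbb{S}ec_{9}(\mathcal{V}_4^3)$, and using the invariance of the secant together with the equivariance of the osculating space, yields $\mathbb{T}^{3}_{[L^{4}]}\mathcal{V}_4^3\subseteq\mathbb{S}ec_{9}(\mathcal{V}_4^3)$ for every $L$. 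Since $\mathbb{S}ec_{9}(\mathcal{V}_4^3)$ is Zariski closed, the closure $T^{3}\mathcal{V}_4^3$ of the union of all these osculating spaces is contained in it, which is the first assertion; the identical reasoning starting from the case $(n,d)=(4,3)$ gives $T^{3}\mathcal{V}_4^4\subseteq\mathbb{S}ec_{14}(\mathcal{V}_4^4)$. The only step that genuinely deserves care---the ``obstacle,'' though a standard one---is the equivariance of osculating spaces under $GL_{n+1}$, which rests on their intrinsic description; everything else is immediate once the base-point computations are taken as input.
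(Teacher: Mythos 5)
Your argument is correct and is essentially the paper's own proof: both rely on the $PGL(n+1)$-equivariance of the Veronese, its secant varieties, and its osculating spaces, together with transitivity of the action on $\mathcal{V}_d^n$, to propagate the single-point containments from the $(n,d)=(3,3)$ and $(4,3)$ computations to the whole osculating variety. Your explicit remark that the secant variety is Zariski closed, so the closure defining $T^{3}\mathcal{V}_4^n$ stays inside it, is a small point the paper leaves implicit.
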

\begin{proof}
The action of $PGL(n+1)$ on $\mathbb{P}^n$ extends naturally to an action on $\P^{N(n,d)}$ stabilizing $\mathcal{V}_d^n$ and more generally the secant varieties $\sec_h(\mathcal{V}_d^n)$. Since this action is transitive on $\mathcal{V}_d^n$ we have $\mathbb{T}_{[x_{0}^{d}]}^{a}\mathcal{V}_d^n\subseteq\mathbb{S}ec_{h}(\mathcal{V}_d^n)$ if and only if $\mathbb{T}_{[L^{d}]}^{a}\mathcal{V}_d^n\subseteq\sec_h\mathcal{V}_d^n$ for any point $[L^d]\in\mathcal{V}_d^n$ that is $T^a\mathcal{V}_d^n\subseteq\sec_h\mathcal{V}_d^n$. Finally, we conclude by applying \ref{33} and \ref{34}.   
\end{proof}

\stepcounter{thm}
\subsection{Macaulay2 implementation}\label{mac2}
In the Macaulay2 file \texttt{Comon-1.0.m2} we provide a function called \texttt{Comon} which operates as follows:
\begin{itemize}
\item[-] \texttt{Comon} takes in input three natural numbers $n,d,h$;
\item[-] if $h < \binom{n+\lfloor\frac{d}{2}\rfloor}{n}$ then the function returns that Comon's conjecture holds for the general degree $d$ polynomial in $n+1$ variables of rank $h$ by the usual flattenings method in Proposition \ref{p3}. If not, and $d$ is even then it returns that the method does not apply;
\item[-] if $d$ is odd and $\binom{n+k}{n} < 2\binom{n+k-1}{n}$, where $k = \lfloor\frac{d+1}{2}\rfloor$, then again it returns that the method does not apply;
\item[-] if $d$ is odd, $\binom{n+k}{n} \geq 2\binom{n+k-1}{n}$ and $2h-1 > \binom{n+k}{n}$ then it returns that the method does not apply since $2h-2$ must be smaller than the number of order $k$ partial derivatives;
\item[-] if $d$ is odd, $\binom{n+k}{n} \geq 2\binom{n+k-1}{n}$ and $2h-1 \leq \binom{n+k}{n}$ then \texttt{Comon}, in the spirit of Remark \ref{remfun}, produces a polynomial of the form 
$$F = \sum_{i=1}^h(a_{i,0}x_0+ \dots +a_{i,n}x_n)^d$$
then substitutes random rational values to the $a_{i,j}$, computes the polynomial $G = x_0F$, the catalecticant matrix $D$ of order $k$ partial derivatives of $G$, extracts the most up left $2h-1\times 2h-1$ minor $P$ of $D$, and compute the determinant $\det(P)$ of $P$;
\item[-] if $\det(P) = 0$ then \texttt{Comon} returns that the method does not apply, otherwise it returns that Comon's conjecture holds for the general degree $d$ polynomial in $n+1$ variables of rank $h$.
\end{itemize}
Note that since the function \texttt{random} is involved \texttt{Comon} may return that the method does not apply even though it does. Clearly, this event is extremely unlikely. Thanks to this function we are able to prove that Comon's conjecture holds in some new cases that are not covered by Proposition \ref{p3}. Since the case $n = 1$ is covered by Proposition \ref{p5} in the following we assume that $n\geq 2$.

\begin{thm}
Assume $n\geq 2$ and set $h = \binom{n+\lfloor\frac{d}{2}\rfloor}{n}$. Then Comon's conjecture holds for the general degree $d$ homogeneous polynomial in $n+1$ variables of rank $h$ in the following cases:
\begin{itemize}
\item[-] $d = 3$ and $2\leq n \leq 30$;
\item[-] $d = 5$ and $3\leq n \leq 8$;
\item[-] $d = 7$ and $n = 4$.
\end{itemize}
\end{thm}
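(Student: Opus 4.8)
The plan is to verify, in each listed case, that the general $F\in\sec_h(\mathcal{V}_d^n)$ satisfies $\rk(F)\ge h$; since for such $F$ one has $\srk(F)=h$ and the inequality $\rk(F)\le\srk(F)$ always holds, this immediately yields Comon's conjecture. Observe that $h=\binom{n+\lfloor d/2\rfloor}{n}$ is exactly the first rank value left uncovered by Proposition \ref{p3}, whose flattening argument requires the strict inequality $h<\binom{n+\lfloor d/2\rfloor}{n}$. As all three families have $d$ odd, I set $k=\lfloor\frac{d+1}{2}\rfloor=\frac{d+1}{2}$, so that $\lfloor\frac{d}{2}\rfloor=k-1$ and $h=\binom{n+k-1}{n}$. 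The lower bounds on $n$ (namely $n\ge 2$ for $d=3$, $n\ge 3$ for $d=5$, and $n=4$ for $d=7$) are precisely the inequality $n\ge k$, equivalently $\binom{n+k}{n}\ge 2\binom{n+k-1}{n}=2h$, which guarantees that the order-$k$ catalecticant matrix of a degree $d+1$ form --- a square matrix of size $\binom{n+k}{n}$ --- is large enough to carry a nonvanishing minor of size $2h-1$.

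First I would reduce the statement to the nonvanishing of such a catalecticant minor, following Remarks \ref{embdeg} and \ref{remfun}. Suppose, for contradiction, that the general $F\in\sec_h(\mathcal{V}_d^n)$ had $\rk(F)\le h-1$, that is $F\in\sec_{h-1}(\mathcal{S}^{n_1})$. Multiplying by $x_0$ and invoking the embedding of Remark \ref{remfun}, this forces $x_0F\in\sec_{2h-2}(\mathcal{S}^{n})$. Now the order-$k$ catalecticant matrix of $x_0F$ is, by Section \ref{flat}, the restriction to the space of symmetric tensors of an $(A,B)$-flattening of $x_0F$ regarded as a point of the Segre variety $\mathcal{S}^{n}$, so its rank is a lower bound for $\rk_{\mathcal{S}^{n}}(x_0F)$. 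Consequently, if this catalecticant admits a nonzero minor of size $2h-1$, then $\rk_{\mathcal{S}^{n}}(x_0F)\ge 2h-1$, whence $x_0F\notin\sec_{2h-2}(\mathcal{S}^{n})$, contradicting the assumption. It therefore suffices to exhibit, for the general $F$, an order-$k$ catalecticant of $x_0F$ of rank at least $2h-1$.

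Next I would carry out the nonvanishing check explicitly, exactly as packaged in the function \texttt{Comon} of Section \ref{mac2}. For each pair $(n,d)$ in the list I would take $F=\sum_{i=1}^h L_i^d$ with the $L_i$ having random rational coefficients, form $G=x_0F$, write down its order-$k$ catalecticant matrix $D$ of size $\binom{n+k}{n}$, extract a fixed $(2h-1)\times(2h-1)$ submatrix $P$, and compute $\det(P)$ over $\mathbb{Q}$. A single nonzero evaluation certifies that the corresponding $(2h-1)\times(2h-1)$ minor, viewed as a polynomial in the coefficients of the decomposition, is not identically zero; since rank is lower semicontinuous and the map $(L_1,\dots,L_h)\mapsto F$ dominates the irreducible variety $\sec_h(\mathcal{V}_d^n)$, the minor is then nonzero on a dense open subset, which is all that is needed for the general $F$.

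The main obstacle is that this final step is genuinely case-by-case and yields no uniform certificate: I do not expect a closed-form argument showing that the chosen minor of the catalecticant of $x_0F$ is nonzero, so each $(n,d)$ must be confirmed by a separate computation. The shape of the three ranges reflects this precisely. The condition $n\ge k$ is what makes the matrix large enough for the method to have any chance, whereas the upper limits ($n\le 30$ for $d=3$, $n\le 8$ for $d=5$, and only $n=4$ for $d=7$) are the practical ceilings set by the size $\binom{n+k}{n}$ of the catalecticant and the cost of evaluating a determinant of order $2h-1$ over $\mathbb{Q}$; beyond them the computation becomes infeasible, and one cannot even exclude that the relevant minor begins to vanish. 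Because random coefficients are used, there is a measure-zero chance that an unlucky evaluation returns zero, but this can only cause the method to fail to detect nonvanishing and never produces a false positive.
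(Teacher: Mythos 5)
Your proposal is correct and follows essentially the same route as the paper: the paper's proof consists exactly of the Macaulay2 verification, via the function \texttt{Comon} of Section \ref{mac2}, that the $(2h-1)\times(2h-1)$ minor of the order-$\lfloor\frac{d+1}{2}\rfloor$ catalecticant of $x_0F$ is nonzero at a random point of the parametrization, combined with the reduction of Remark \ref{remfun}, and you have reconstructed both the reduction and the semicontinuity/dominance argument that a single nonzero evaluation certifies generic nonvanishing. The only cosmetic slip is the step ``$\rank_{\mathcal{S}^{n}}(x_0F)\ge 2h-1$, whence $x_0F\notin\sec_{2h-2}(\mathcal{S}^{n})$'': membership in a secant variety is governed by border rank rather than rank, but since the flattening minors vanish on all of $\sec_{2h-2}(\mathcal{S}^{n})$ and hence bound the border rank from below, your conclusion is unaffected.
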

\begin{proof}
The proof is based on Macaualy2 computations using the function \texttt{Comon} exactly as shown in Example \ref{exp} below. 
\end{proof}

\begin{Example}\label{exp}
We apply the function \texttt{Comon} in a few interesting cases:
\begin{verbatim}
Macaulay2, version 1.12
with packages: ConwayPolynomials, Elimination, IntegralClosure, InverseSystems,
               LLLBases, PrimaryDecomposition, ReesAlgebra, TangentCone
i1 : loadPackage "Comon-1.0.m2";
i2 : Comon(5,3,4)
Lowest rank for which the usual flattenings method does not work = 6
o2 = Comon's conjecture holds for the general degree 3 homogeneous polynomial 
     in 6 variables of rank 4 by the usual flattenings method
i3 : Comon(5,3,6)
Lowest rank for which the usual flattenings method does not work = 6
o3 = Comon's conjecture holds for the general degree 3 homogeneous polynomial 
     in 6 variables of rank 6
i4 : Comon(5,3,7)
Lowest rank for which the usual flattenings method does not work = 6
o4 = The method does not apply --- The determinant vanishes 
i5 : Comon(5,5,21)
Lowest rank for which the usual flattenings method does not work = 21
o5 = Comon's conjecture holds for the general degree 5 homogeneous polynomial 
     in 6 variables of rank 21
i6 : Comon(4,7,35)
Lowest rank for which the usual flattenings method does not work = 35
o6 = Comon's conjecture holds for the general degree 7 homogeneous polynomial
     in 5 variables of rank 35
\end{verbatim}
\end{Example}

\bibliographystyle{amsalpha}
\bibliography{Biblio}

\providecommand{\bysame}{\leavevmode\hbox to3em{\hrulefill}\thinspace}
\providecommand{\MR}{\relax\ifhmode\unskip\space\fi MR }
\providecommand{\MRhref}[2]{%
  \href{http://www.ams.org/mathscinet-getitem?mr=#1}{#2}
}
\providecommand{\href}[2]{#2}
\begin{thebibliography}{CGLM08}

\bibitem[BB13]{BB13}
E.~Ballico and A.~Bernardi, \emph{Tensor ranks on tangent developable of
  {S}egre varieties}, Linear Multilinear Algebra \textbf{61} (2013), no.~7,
  881--894. \MR{3175333}

\bibitem[BFFX17]{BFFX17}
A.~Buscarino, L.~Fortuna, M.~Frasca, and M.G. Xibilia, \emph{Positive-real
  systems under lossless transformations: Invariants and reduced order models},
  Journal of the Franklin Institute \textbf{354} (2017), no.~11, 4273 -- 4288.

\bibitem[BGL13]{BGL13}
J.~Buczy\'nski, A.~Ginensky, and J.~M. Landsberg, \emph{Determinantal equations
  for secant varieties and the {E}isenbud-{K}oh-{S}tillman conjecture}, J.
  Lond. Math. Soc. (2) \textbf{88} (2013), no.~1, 1--24. \MR{3092255}

\bibitem[BL13]{BL13}
J.~Buczy\'nski and J.~M. Landsberg, \emph{Ranks of tensors and a generalization
  of secant varieties}, Linear Algebra Appl. \textbf{438} (2013), no.~2,
  668--689. \MR{2996361}

\bibitem[CCC15]{CCC15}
E.~Carlini, M.~V. Catalisano, and L.~Chiantini, \emph{Progress on the symmetric
  {S}trassen conjecture}, J. Pure Appl. Algebra \textbf{219} (2015), no.~8,
  3149--3157. \MR{3320211}

\bibitem[CCO17]{CCO17}
E.~Carlini, M.~V. Catalisano, and A.~Oneto, \emph{Waring loci and the
  {S}trassen conjecture}, Adv. Math. \textbf{314} (2017), 630--662.
  \MR{3658727}

\bibitem[CGLM08]{CGLM08}
P.~Comon, G.~Golub, L.~Lim, and B.~Mourrain, \emph{Symmetric tensors and
  symmetric tensor rank}, SIAM J. Matrix Anal. Appl. \textbf{30} (2008), no.~3,
  1254--1279. \MR{2447451}

\bibitem[CM96]{CM96}
P.~Comon and B.~Mourrain, \emph{Decomposition of quantics in sums of powers of
  linear forms}, Signal Processing \textbf{53} (1996), no.~2, 93--107.

\bibitem[DK93]{DK93}
I.~V. Dolgachev and V.~Kanev, \emph{Polar covariants of plane cubics and
  quartics}, Adv. Math. \textbf{98} (1993), no.~2, 216--301. \MR{1213725}

\bibitem[Dol04]{Do04}
I.~V. Dolgachev, \emph{Dual homogeneous forms and varieties of power sums},
  Milan J. Math. \textbf{72} (2004), 163--187. \MR{2099131}

\bibitem[Fri16]{Fr16}
S.~Friedland, \emph{Remarks on the symmetric rank of symmetric tensors}, SIAM
  J. Matrix Anal. Appl. \textbf{37} (2016), no.~1, 320--337. \MR{3474849}

\bibitem[IK99]{IK99}
Anthony Iarrobino and Vassil Kanev, \emph{Power sums, {G}orenstein algebras,
  and determinantal loci}, Lecture Notes in Mathematics, vol. 1721,
  Springer-Verlag, Berlin, 1999, Appendix C by Iarrobino and Steven L. Kleiman.
  \MR{1735271}

\bibitem[KB09]{BK09}
T.~G. Kolda and B.~W. Bader, \emph{Tensor decompositions and applications},
  SIAM Rev. \textbf{51} (2009), no.~3, 455--500. \MR{2535056}

\bibitem[Lan12]{La12}
J.~M. Landsberg, \emph{Tensors: geometry and applications}, Graduate Studies in
  Mathematics, vol. 128, American Mathematical Society, Providence, RI, 2012.
  \MR{2865915}

\bibitem[LM17]{LM17}
J.~M. Landsberg and M.~Micha{\l}ek, \emph{Abelian tensors}, J. Math. Pures
  Appl. (9) \textbf{108} (2017), no.~3, 333--371. \MR{3682743}

\bibitem[LO15]{LO15}
J.~M. Landsberg and G.~Ottaviani, \emph{New lower bounds for the border rank of
  matrix multiplication}, Theory Comput. \textbf{11} (2015), 285--298.
  \MR{3376667}

\bibitem[Mac92]{Mc2}
Macaulay2, \emph{Macaulay2 a software system devoted to supporting research in
  algebraic geometry and commutative algebra},
  \url{http://www.math.uiuc.edu/Macaulay2/}, 1992.

\bibitem[Mas16]{Ma16}
A.~Massarenti, \emph{Generalized varieties of sums of powers}, Bull. Braz.
  Math. Soc. (N.S.) \textbf{47} (2016), no.~3, 911--934. \MR{3549076}

\bibitem[MM13]{MM13}
A.~Massarenti and M.~Mella, \emph{Birational aspects of the geometry of
  varieties of sums of powers}, Adv. Math. \textbf{243} (2013), 187--202.
  \MR{3062744}

\bibitem[MMS18]{MMS18}
A.~Massarenti, M.~Mella, and G.~Staglian\`o, \emph{Effective identifiability
  criteria for tensors and polynomials}, J. Symbolic Comput. \textbf{87}
  (2018), 227--237. \MR{3744347}

\bibitem[MR13]{MR13}
A.~Massarenti and E.~Raviolo, \emph{The rank of {$n\times n$} matrix
  multiplication is at least {$3n^2-2\sqrt{2}n^\frac{3}{2}-3n$}}, Linear
  Algebra Appl. \textbf{438} (2013), no.~11, 4500--4509. \MR{3034546}

\bibitem[MR14]{MR13C}
\bysame, \emph{Corrigendum to ``{T}he rank of {$n\times n$} matrix
  multiplication is at least {$3n^2-2\sqrt{2}n^{\frac{3}{2}}-3n$}'' [{L}inear
  {A}lgebra {A}ppl. 438 (11) (2013) 4500--4509] [mr3034546]}, Linear Algebra
  Appl. \textbf{445} (2014), 369--371. \MR{3151280}

\bibitem[RS00]{RS00}
K.~Ranestad and F-O. Schreyer, \emph{Varieties of sums of powers}, J. Reine
  Angew. Math. \textbf{525} (2000), 147--181. \MR{1780430}

\bibitem[Sch81]{Sc81}
A.~Sch\"onhage, \emph{Partial and total matrix multiplication}, SIAM J. Comput.
  \textbf{10} (1981), no.~3, 434--455. \MR{623057}

\bibitem[Sei18]{Se18}
A.~Seigal, \emph{Ranks and symmetric ranks of cubic surfaces},
  \url{https://arxiv.org/abs/1801.05377}, 2018.

\bibitem[Shi17a]{Sh17}
Y.~Shitov, \emph{A counterexample to {C}omon's conjecture},
  \url{https://arxiv.org/abs/1705.08740}, 2017.

\bibitem[Shi17b]{Sh17b}
\bysame, \emph{A counterexample to {S}trassen's direct sum conjecture},
  \url{https://arxiv.org/abs/1712.08660}, 2017.

\bibitem[Str73]{St73}
V.~Strassen, \emph{Vermeidung von {D}ivisionen}, J. Reine Angew. Math.
  \textbf{264} (1973), 184--202. \MR{0521168}

\bibitem[SU00]{SU00}
A.~Simis and B.~Ulrich, \emph{On the ideal of an embedded join}, J. Algebra
  \textbf{226} (2000), no.~1, 1--14. \MR{1749874}

\bibitem[Tei15]{Te15}
Z.~Teitler, \emph{Sufficient conditions for {S}trassen's additivity
  conjecture}, Illinois J. Math. \textbf{59} (2015), no.~4, 1071--1085.
  \MR{3628301}

\bibitem[TZ11]{TZ11}
H.~Takagi and F.~Zucconi, \emph{Spin curves and {S}corza quartics}, Math. Ann.
  \textbf{349} (2011), no.~3, 623--645. \MR{2755000}

\bibitem[ZHQ16]{ZHQ16}
X.~Zhang, Z-H. Huang, and L.~Qi, \emph{Comon's conjecture, rank decomposition,
  and symmetric rank decomposition of symmetric tensors}, SIAM J. Matrix Anal.
  Appl. \textbf{37} (2016), no.~4, 1719--1728. \MR{3574587}

\end{thebibliography}
\end{document}